\documentclass[11pt,a4paper]{article}
\usepackage[a4paper,left=1.08in,right=1.08in,top=0.94in,bottom=0.76in]{geometry}
\usepackage{amsmath,amssymb,amsthm}
\usepackage{microtype}
\usepackage{needspace}
\usepackage{xcolor}
\usepackage{hyperref}
\usepackage{cite}
\hypersetup{
  pdftitle={Nonregular graphs of odd maximum degree with maximum spectral radius},
  pdfsubject={Revised manuscript incorporating the supplied PDF comments},
  colorlinks=true,
  linkcolor=red,
  citecolor=cyan!60!blue,
  urlcolor=blue
}
\makeatletter
\def\leftharpoonfill@{\arrowfill@\leftharpoonup\relbar\relbar}
\def\rightharpoonfill@{\arrowfill@\relbar\relbar\rightharpoonup}
\newcommand\rbjt{\mathpalette{\overarrow@\rightharpoonfill@}}
\newcommand\lbjt{\mathpalette{\overarrow@\leftharpoonfill@}}
\makeatother
\numberwithin{equation}{section}
\newtheorem{theorem}{Theorem}[section]
\newtheorem{lemma}[theorem]{Lemma}
\newtheorem{corollary}[theorem]{Corollary}
\newtheorem{conjecture}[theorem]{Conjecture}
\newtheorem{proposition}[theorem]{Proposition}
\theoremstyle{remark}

\newcommand{\cE}{\mathcal E}
\newcommand{\cJ}{\mathcal J}

\newcommand{\trans}{\mathsf T}
\newcommand{\doi}[1]{\href{https://doi.org/#1}{\nolinkurl{doi:#1}}}
\title{Nonregular graphs of odd maximum degree with maximum spectral radius}
\author{
Liangdong Fan$^1$, Liying Kang$^{2,3}$\footnote{Corresponding author. Email: lykang@shu.edu.cn}, Yaojun Chen$^1$\\
{\small $^1$School of Mathematics, Nanjing University, Nanjing 210093, P.R. CHINA}\\
{\small $^2$Department of Mathematics, Shanghai University, Shanghai 200444, P.R. CHINA}\\
{\small $^3$Newtouch Center for Mathematics of Shanghai University, Shanghai 200444, P.R. CHINA}
}
\date{}

\begin{document}
\pagestyle{plain}
\maketitle
\vspace{-1.1em}
\begin{abstract}

Let $\rho(n,d)$ denote the maximum adjacency spectral radius among all
connected nonregular graphs of order $n$ and maximum degree $d$.
A graph attaining this maximum is called an \emph{extremal graph}.

Liu [\emph{J. Combin. Theory Ser. B}, 2024] determined the extremal
graphs for $d=3,4$ and formulated two conjectures for general $d$.
For each fixed odd integer $d\ge3$, the conjectures assert that:
\begin{enumerate}
\setlength{\itemsep}{2pt}
\setlength{\parskip}{0pt}
\item $\displaystyle\lim_{n\to\infty}n^2\bigl(d-\rho(n,d)\bigr)
      =(d-1)\pi^2/4$.
\item For all sufficiently large $n$, the degree sequence of every
      extremal graph is $(d,\ldots,d,d-1)$ for odd $n$ and
      $(d,\ldots,d,1)$ for even $n$.
\end{enumerate}
We prove the first conjecture for every fixed odd $d\ge3$ and, more
precisely, obtain the asymptotic expansion
\[
 \rho(n,d)
 =d-\frac{(d-1)\pi^2}{4n^2}
   +\frac{(d-1)^2\pi^2}{4n^3}
   +O_d(n^{-4})
 \qquad(n\to\infty).
\]
We further prove the second conjecture for every fixed odd $d\ge3$.
\vskip 2mm
\noindent\textbf{Keywords.}
Spectral radius; Nonregular graph; Maximum degree; Extremal graph; Degree sequence\\
\textbf{2020 Mathematics Subject Classification.}
05C50, 05C35, 05C07.
\end{abstract}
\section{Introduction}\label{sec:intro}
All graphs are finite and undirected. Unless stated otherwise, they
are simple and connected. For a graph $G$, let $V(G)$ and $E(G)$
be its vertex and edge sets, respectively, let $d_G(v)$ be the degree of
$v\in V(G)$, and let $\Delta(G)$ be its maximum degree.
Let $A(G)$ denote the adjacency matrix of $G$.
The largest eigenvalue of $A(G)$, denoted by $\lambda_1(G)$,
is called the \emph{spectral radius} of $G$.
We write $K_m$ for the complete graph on $m$ vertices. 
It is well known that
$\lambda_1(G)\le\Delta(G)$,
with equality if and only if $G$ is regular. Hence
\[
\Delta(G)-\lambda_1(G)>0
\]
for every connected nonregular graph.
This naturally leads to the following extremal problem. 
\begin{quote} \textbf{Problem.} Among connected nonregular graphs of given order and maximum degree, how small can $\Delta(G)-\lambda_1(G)$ be, and which graphs attain this minimum? \end{quote}

For positive integers $n$ and $d$, let $\mathcal G(n,d)$ denote the
family of connected nonregular graphs of order $n$ and maximum degree
$d$. Whenever $\mathcal G(n,d)\ne\varnothing$, define
$\rho(n,d):=\max\{\lambda_1(G):G\in\mathcal G(n,d)\}$.
A graph $G\in\mathcal G(n,d)$ is called an \emph{extremal graph} if
$\lambda_1(G)=\rho(n,d)$. Thus the minimum in the problem above is
$d-\rho(n,d)$.

For a graph $G$ of order $n$, write
$\pi(G)=(d_1,\ldots,d_n)$, where $d_1\ge\cdots\ge d_n$, denote its degree sequence in nonincreasing order. For $G\in\mathcal G(n,d)$, the quantity
$\sum_{v\in V(G)}(d-d_G(v))=nd-2|E(G)|$ is called the
\emph{total degree deficiency} of $G$.

A subscript in the $O$- or $\Theta$-notation indicates the parameters
on which the implicit constant may depend.

\subsection{Background}
Stevanovi\'c~\cite{Stevanovic} first obtained a lower bound on
$d-\rho(n,d)$, and Zhang~\cite{Zhang} subsequently improved this bound.
Let $G\in\mathcal G(n,d)$ be an extremal graph, put $m:=|E(G)|$, and let
\(
D:=\operatorname{diam}(G)
  =\max_{u,v\in V(G)}\operatorname{dist}_G(u,v),
\)
where $\operatorname{dist}_G(u,v)$ is the length of a shortest
$u$--$v$ path. Cioab\u{a}, Gregory and Nikiforov~\cite{CGN} proved that
$d-\rho(n,d)>(nd-2m)/[n(D(nd-2m)+1)]$, and
Cioab\u{a}~\cite{Cioaba} later proved that
$d-\rho(n,d)>1/(nD)$.

Liu, Shen and Wang~\cite{LSW} proved that
$d-\rho(n,d)\ge(d+1)/[n(3n+2d-4)]$, and Liu, Li, Liu and You~\cite{LL,LLY} established the stronger bound
$d-\rho(n,d)>(d+1)/[n(3n+d-8)]$. Liu, Shen and Wang also established
the following asymptotic order for $d-\rho(n,d)$.
\begin{theorem}[Liu, Shen and Wang~\cite{LSW}]
\label{thm:lsw-order}
For every fixed integer $d\ge2$, one has
$d-\rho(n,d)=\Theta_d(n^{-2})$ as $n\to\infty$.
\end{theorem}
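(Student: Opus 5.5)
The statement has two halves: an upper bound $d-\rho(n,d)=O_d(n^{-2})$, obtained from a construction, and a lower bound $d-\rho(n,d)=\Omega_d(n^{-2})$, which holds for every graph in $\mathcal G(n,d)$. We treat them separately.

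\emph{Lower bound.} Let $G\in\mathcal G(n,d)$ with Perron vector $x$, normalised so that $\max_v x_v=x_u=1$. We use the standard identity
\[
 d-\lambda_1=\frac{\sum_{v}(d-d_G(v))x_v^2+\sum_{vw\in E}(x_v-x_w)^2}{\sum_v x_v^2}.
\]
Since $G$ is nonregular, there is a vertex $w$ with $d_G(w)\le d-1$. Take a shortest $u$--$w$ path, of length at most $n-1$. By Cauchy--Schwarz along this path, $\sum_{E}(x_v-x_w)^2\ge (1-x_w)^2/(n-1)$, while the first sum is at least $x_w^2$. Minimising $x_w^2+(1-x_w)^2/(n-1)$ over $x_w$ gives a numerator of at least $1/n$. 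The denominator is at most $n$, so $d-\lambda_1\ge 1/n^2$. This argument is essentially the one of Cioab\u{a}~\cite{Cioaba}, and it is uniform in $d$.

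\emph{Upper bound.} Here we must construct, for each large $n$ (with $nd$ of the right parity, and handling both parities of $n$), a connected nonregular graph of maximum degree $d$ and order $n$ with $d-\lambda_1\le C_d n^{-2}$. The plan is a ``long path of near-regular blocks'':
\begin{enumerate}
\item Take $k\approx n/(d+1)$ gadgets $B_1,\dots,B_k$, each a copy of $K_{d+1}$ with one edge removed. Each gadget has exactly two vertices of degree $d-1$.
\item Chain consecutive gadgets by an edge between their deficient vertices. The resulting graph is $d$-regular except at the two ends, and each end vertex has degree $d-1$.
\item Adjust the order $n$ using a small end gadget; the parity constraint forces $nd$ even only when $d$ is odd, and a bounded-size end piece with some deficient vertices fixes the residues.
\end{enumerate}

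For the Rayleigh quotient we use the test vector $x_v=\cos\!\bigl(\pi(i-\tfrac12)/(2k)\bigr)$-type values that are constant on $B_i$, or better, values growing towards one end so that the deficient end has small weight. With this choice:
\begin{itemize}
\item Each of the $k-1$ bridge edges contributes $O(1/k^2)$ to the edge term.
\item The deficient end vertices contribute $O(1/k^2)$ to the degree-deficiency term, because $x$ is small there.
\item The denominator is $\Theta(n)$.
\end{itemize}
Hence
\[
 d-\lambda_1\le \frac{O(k\cdot k^{-2})+O(k^{-2})}{\Theta(n)}=O(n^{-2}).
\]
Better still, put all the deficiency at one end, where $x\approx0$, so that the deficiency term vanishes to leading order.

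The main obstacle is making the construction valid for every residue of $n$ modulo $d+1$ while keeping the maximum degree exactly $d$, connectivity, and nonregularity. We would handle this by allowing one end gadget of bounded size, with between $d+1$ and $2d+2$ vertices, built as a near-regular graph with a prescribed small deficiency. Such a graph exists by elementary degree-sequence realisation, and because its size is bounded it changes the Rayleigh quotient estimate only by $O_d(n^{-2})$.
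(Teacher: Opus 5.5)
Your lower bound is correct. Restrict the edge sum to a shortest $u$--$w$ path of length $\ell\le n-1$ and minimise $t^2+(1-t)^2/\ell$ over $t=x_w$; the numerator is then at least $1/(\ell+1)\ge 1/n$. Hence $d-\lambda_1(G)\ge n^{-2}$ for every $G\in\mathcal G(n,d)$ and every $d\ge2$. The paper gives no proof of this theorem; it only quotes it. For odd $d$ it proves sharp two-sided estimates instead: Corollary~\ref{cor:explicit} uses the ordered-cut comparison with a path, and Lemma~\ref{lem:construction} is an explicit construction. Your lower-bound argument is cruder but elementary, and it covers every $d$.

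The upper bound fails as written. In step~2 both ends of the chain contain a vertex of degree $d-1$. Yet the test vectors you propose, $\cos(\pi(i-\tfrac12)/(2k))$ on $B_i$ or ``values growing towards one end'', are close to $1$ on one end block. The deficient vertex there contributes $\Theta(1)$ to the numerator, so the quotient is $\Theta(1/n)$. Your bullet claiming that the deficient ends contribute $O(k^{-2})$ is therefore false for the vector you chose.

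Your alternative, putting all the deficiency at the small end, needs the large end to be exactly $d$-regular. With a single bridge edge, that requires a closing gadget with exactly one vertex of degree $d-1$ and all others of degree $d$. By the handshake lemma, no such gadget exists when $d$ is even, and when $d$ is odd it must have odd order. This is exactly why Lemma~\ref{lem:construction}, which treats only odd $d$, uses the odd-order graph $F_s$ and an extra leaf when $n$ is even. Your step~3 does not resolve this. Its parity remark is also off: nonregular graphs impose no parity condition on $n$, only on the total deficiency $nd-2|E(G)|$.

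The repair is easy, because only the order of magnitude is needed. Put $x\equiv\sin(\pi i/(k+1))$ on $B_i$, which is small at \emph{both} ends. Write $n=k(d+1)+r$ with $0\le r\le d$, and hang a path on $r$ vertices from the degree-$(d-1)$ vertex of $B_k$, giving the path the value of $B_k$. Then:
\begin{itemize}
\item edges inside blocks and on the path contribute $0$;
\item the $k-1$ bridges contribute at most $(k-1)\pi^2/(k+1)^2\le\pi^2/k$;
\item all deficient vertices together contribute $O(d^2/k^2)$, since each has value $O(1/k)$;
\item the denominator is at least $(d+1)\sum_{i=1}^k\sin^2(\pi i/(k+1))=(d+1)(k+1)/2$.
\end{itemize}
The graph is connected and nonregular with maximum degree exactly $d$. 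This gives $d-\lambda_1\le C_dn^{-2}$ for all $d\ge2$ and all large $n$, with no gadget-existence or parity issues.
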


They further conjectured the precise leading asymptotic constant.

\begin{conjecture}[Liu--Shen--Wang~\cite{LSW}]
\label{conj:lsw}
For every fixed integer $d\ge2$,
\[
 \lim_{n\to\infty}
 \frac{n^2\bigl(d-\rho(n,d)\bigr)}{d-1}=\pi^2.
\]
\end{conjecture}

The case $d=2$ is immediate, since every graph in $\mathcal G(n,2)$ is
the path $P_n$ and $\lambda_1(P_n)=2\cos(\pi/(n+1))$.

Another conjecture of Liu and Li concerns the degree sequence of
extremal graphs.
\begin{conjecture}[Liu--Li~\cite{LL}]
\label{conj:liu-li}
Let $3\le d\le n-2$, and let $G\in\mathcal G(n,d)$ be extremal. Then
\[
 \pi(G)=
 \begin{cases}
 (d,\ldots,d,d-1),& nd\ \textnormal{is odd},\\
 (d,\ldots,d,d-2),& nd\ \textnormal{is even}.
 \end{cases}
\]
\end{conjecture}

Liu~\cite{Liu} disproved Conjecture~\ref{conj:lsw} for every $d\ge3$
and confirmed Conjecture~\ref{conj:liu-li} for $d\in\{3,4\}$ by
determining the corresponding extremal graphs. To state his asymptotic
bound, set
\[
 \kappa_d:=
 \begin{cases}
 \dfrac{(d-1)\pi^2}{4},& d\ \textnormal{is odd},\\[6pt]
 \dfrac{(d-2)\pi^2}{2},& d\ \textnormal{is even}.
 \end{cases}
\]
\begin{theorem}[Liu~\cite{Liu}]
\label{thm:liu}
For every fixed integer $d\ge3$,
\[
 \limsup_{n\to\infty}
 \Bigl[n^2\bigl(d-\rho(n,d)\bigr)\Bigr]\le\kappa_d.
\]
For $d\in\{3,4\}$, the sequence
$n^2\bigl(d-\rho(n,d)\bigr)$ converges to $\kappa_d$ as $n\to\infty$.
\end{theorem}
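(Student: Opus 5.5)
The plan is to prove the two assertions of Theorem~\ref{thm:liu} separately. The $\limsup$ bound needs only explicit graphs together with a Rayleigh-quotient estimate. The convergence for $d\in\{3,4\}$ also needs a matching lower bound, which I would obtain from a structural description of the extremal graphs.

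\emph{Upper bound on $d-\rho(n,d)$.} For each large $n$ I would construct a graph $G_n\in\mathcal G(n,d)$ with one of two shapes:
\begin{itemize}
\item[(a)] the degree deficiency is concentrated at a single end, for instance one vertex of degree $d-1$ when $nd$ is odd;
\item[(b)] the graph is a long linear chain of $k\approx n/s$ identical gadgets $B_1,\dots,B_k$, each on $s$ vertices, with consecutive gadgets joined by $c$ edges and all other degrees equal to $d$.
\end{itemize}
In the chain picture I would use a test vector $x$ that is constant on each gadget, with $x|_{B_j}=\cos\bigl(\tfrac{\pi (j-1)}{2k}\bigr)$ up to lower-order boundary corrections. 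The deficient end is then a node of the discrete cosine, and the opposite end acts as a free (Neumann) end. Computing
\[
 d-\frac{x^{\trans}A(G_n)x}{x^{\trans}x}
 =\frac{\sum_{v}(d-d(v))x_v^2+\sum_{uv\in E}(x_u-x_v)^2}{x^{\trans}x},
\]
the numerator is about $c\sum_j (x_{j}-x_{j+1})^2\approx c\,k(\pi/2k)^2/2$, and the denominator is about $s k/2$. This gives
\[
 d-\lambda_1(G_n)\le \frac{c\,s\,\pi^2}{4n^2}(1+o(1)).
\]
The quarter-wave shape of the cosine is exactly what produces the factor $1/4$, in contrast with the $\pi^2$ of Conjecture~\ref{conj:lsw}.

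It then remains to design gadgets realising $cs=d-1$ for odd $d$ and $cs=2(d-2)$ for even $d$. For odd $d$ I would take nearly complete pieces on about $d-1$ or $d+1$ vertices, with a perfect matching removed so that exactly one edge leaves to each neighbouring piece. For even $d$, a parity obstruction forces two connecting edges or doubled gadgets, which explains the different constant. Finally I would fix the finitely many end gadgets so that the order is exactly $n$ and the graph is nonregular. Because these boundary adjustments are $O(1)$ in size, they affect $d-\lambda_1$ only at order $O(n^{-3})$.

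\emph{Matching lower bound for $d\in\{3,4\}$.} Here I would first show that an extremal graph has total degree deficiency $1$ or $2$, which is Conjecture~\ref{conj:liu-li} for these $d$. The idea is to compare with the graphs above: since $d-\rho=O(n^{-2})$ by Theorem~\ref{thm:lsw-order}, the identity $d-\lambda_1=\sum_v (d-d(v))x_v^2+\sum_{uv}(x_u-x_v)^2$ for the unit Perron vector forces the deficiency to sit where $x$ is tiny. I would then analyse the structure of the extremal graph and show it is a chain of bounded-size blocks separated by small edge cuts; for $d=3$ these cuts are bridges or $2$-edge cuts. Projecting the Perron vector onto block averages reduces the eigenproblem to a weighted path. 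For that path, a discrete Sturm comparison with the cosine shows $d-\lambda_1\ge \kappa_d n^{-2}(1-o(1))$.

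\emph{Main obstacle.} I expect the structural step to be the hardest: proving that an extremal cubic or quartic graph must be such a chain with the optimal cut sizes and block sizes. I would try local switching operations, replacing a block with one of better conductance-to-mass ratio, and show via eigenvector perturbation that each such switch strictly increases $\lambda_1$ unless the block is already optimal.
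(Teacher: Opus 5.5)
The paper does not prove this statement; it is quoted from Liu. For odd $d$, in particular $d=3$, it follows from Theorem~\ref{thm:asymptotic}, that is, from Lemma~\ref{lem:construction} together with Corollary~\ref{cor:path-baseline}. For even $d$, including $d=4$, the paper gives no argument.

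\textbf{The upper bound has a genuine gap.} Your quarter-wave picture is right: the deficiency sits at the Dirichlet end and the far end is free. Your Rayleigh computation is also correct for the vector you chose. If $x$ is constant on each gadget, only the $c$ connecting edges carry energy, and you get $cs\pi^2/(4n^2)$.

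The problem is that the target $cs=d-1$ cannot be realised. Suppose every vertex of a gadget $B$ on $s$ vertices has degree $d$, and $B$ sends $c$ edges to each neighbouring gadget. Then $sd=2e(B)+2c$ and $e(B)\le\binom{s}{2}$, so $2c\ge s(d-s+1)$.
\begin{itemize}
\item If $s\ge d+1$, then $cs\ge d+1$.
\item If $2\le s\le d$, then $2cs\ge s^2(d+1-s)\ge\min\{4(d-1),d^2\}\ge2(d+1)$, since $s\mapsto s^2(d+1-s)$ is unimodal.
\item $s=1$ is impossible in a simple graph.
\end{itemize}
So every chain of your type has $cs\ge d+1$. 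Your construction therefore yields only $\limsup_{n\to\infty}n^2(d-\rho(n,d))\le(d+1)\pi^2/4$. This is strictly weaker than $\kappa_d$ for every odd $d$, and also for $d=4$. Separately, $K_{d+1}$ minus a perfect matching needs $d+1$ external edges, not one to each side.

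The missing idea is that the test vector must be harmonic inside each gadget. The drop should be spread through the gadget rather than concentrated on the connecting edges. The relevant constant is then the effective conductance of one period times its order, not $cs$.

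In Lemma~\ref{lem:construction}, which follows Liu, $B_j=K_{d+1}-u_jb_j$ is joined to the next block by the bridge $b_ju_{j+1}$. The test function is $f=z_j+t_j/m$ on the $K$ middle vertices and $f(b_j)=z_j+2t_j/m$. The energy of one period is then $(K/m)t_j^2$ rather than $t_j^2$, so the constant becomes $(K/m)\cdot m=d-1$. The deficiency sits at the Dirichlet end ($u_0$, or a leaf at $u_0$). The free end is closed by the near-regular graph $F_s$, which is where oddness of $d$is used. Lemma~\ref{lem:path-minimum} then evaluates the resulting path problem. For even $d$ you would need the corresponding effective-conductance computation for suitable gadgets, which you do not give.

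\textbf{The lower bound for $d\in\{3,4\}$ is only a programme.} The step you flag as the main obstacle is that an extremal graph is a chain of bounded blocks with optimal cuts. That is essentially Liu's determination of the extremal graphs, and ``each switch strictly increases $\lambda_1$'' is not an argument.

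Your preliminary step is also misdirected. The energy identity with $\mu=O(n^{-2})$ shows only that deficient vertices have small Perron entries, not that there are few of them. The smallest entries are of order $n^{-3/2}$ (as in $G_n$), so extra deficiency there is invisible at order $n^{-2}$. In this paper the degree sequence is obtained only after an $n^{-3}$-order comparison (Lemmas~\ref{lem:path-strict} and~\ref{lem:initial-strict}). It is thus a consequence of the sharp asymptotics, not a prerequisite.

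The paper's lower bound needs no structural information. It holds for every $G\in\mathcal G(n,d)$ with $d$ odd, and so covers $d=3$. The steps are:
\begin{itemize}
\item order the vertices by Perron entries and adjoin $v_0$ to absorb the deficiency;
\item group the indices with small cut $q_i=|\partial S_i|<K$ into blocks of length $d$ or $d+1$ (Lemma~\ref{lem:block-partition});
\item prove the local bound $C_J\ge K/\ell_J$ (Lemma~\ref{lem:block-energy});
\item interpolate linearly and compare with a path.
\end{itemize}
For the leading constant, the crude energy and norm estimates already suffice (Corollary~\ref{cor:explicit}). Proposition~\ref{prop:refined} and Corollary~\ref{cor:path-baseline} only sharpen the error to $O_d(n^{-4})$. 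For $d=4$ you would still need an even-degree analogue of the local estimate, presumably with $K$ replaced by $2(d-2)$. Neither your proposal nor the paper supplies it.
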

Theorem~\ref{thm:liu} leaves open whether
$n^2(d-\rho(n,d))$ converges and, if so, whether its limit equals
$\kappa_d$. Liu~\cite{Liu} conjectured that both are true, and
also proposed a conjecture on the degree sequence of extremal graphs.

\begin{conjecture}[Liu~\cite{Liu}]
\label{conj:asymptotic}
For every fixed integer $d\ge3$,
\[
 \lim_{n\to\infty}
 \Bigl[n^2\bigl(d-\rho(n,d)\bigr)\Bigr]=\kappa_d.
\]
\end{conjecture}
\begin{conjecture}[Liu~\cite{Liu}]
\label{conj:degrees}
For every fixed integer $d\ge3$ and all sufficiently large $n$, every
extremal graph $G\in\mathcal G(n,d)$ has degree sequence
\[
 \pi(G)=
 \begin{cases}
 (d,\ldots,d,d-1),&
 d\ \textnormal{and }n\ \textnormal{are odd},\\
 (d,\ldots,d,1),&
 d\ \textnormal{is odd and }n\ \textnormal{is even},\\
 (d,\ldots,d,d-2),&
 d\ \textnormal{is even}.
 \end{cases}
\]
\end{conjecture}

For odd $d\ge5$ and even $n$, Conjectures~\ref{conj:liu-li}
and~\ref{conj:degrees} give different predictions: the former asserts
that the degree sequence is $(d,\ldots,d,d-2)$, whereas the latter
predicts $(d,\ldots,d,1)$.

When $d$ is close to $n$, Huang, Liu and Yang~\cite{HLY}
characterized the extremal graphs for $d=n-2$ with $n\ge5$ and for
$d=n-3$ with $n\ge59$.
\subsection{Main results}
We prove Conjectures~\ref{conj:asymptotic} and~\ref{conj:degrees} for
every fixed odd integer $d\ge3$. In fact, we obtain a more precise
asymptotic expansion for $\rho(n,d)$, including the term of order
$n^{-3}$.

\begin{theorem}
\label{thm:asymptotic}
Let $d\ge3$ be a fixed odd integer. Then, for all sufficiently large
$n$,
\begin{equation}\label{eq:main}
 \rho(n,d)
 =
 d-\frac{(d-1)\pi^2}
 {4\left(n+\frac{d-1}{2}\right)^2}
 +O_d(n^{-4})
 =
 d-\frac{(d-1)\pi^2}{4n^2}
 +\frac{(d-1)^2\pi^2}{4n^3}
 +O_d(n^{-4}).
\end{equation}
\end{theorem}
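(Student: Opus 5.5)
Two matching bounds are needed: a construction giving the lower bound on $\rho(n,d)$, and an upper bound valid for every $G\in\mathcal G(n,d)$.

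\emph{Lower bound (construction).} For odd $d$ we use a ``long $d$-regular ladder'' of $L$ identical blocks, each block a small gadget with one cut edge or two-edge interface to its neighbours, and close one end so that the single deficiency sits there. The deficiency is a degree $d-1$ vertex (odd $n$) or a pendant vertex (even $n$), and the other end is sealed by a regular ``cap''. On such a graph the Perron vector, after quotienting each block to a constant profile, satisfies a three-term recurrence $x_{i-1}+x_{i+1}=(2-\varepsilon)x_i$ with $\varepsilon\approx (d-\lambda)/c_d$. The boundary conditions are Dirichlet-like at the deficient end and Neumann-like at the capped end. This gives $\cos\theta$-type solutions with $\theta\approx \pi/(2L_{\mathrm{eff}})$. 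Expanding $\lambda=d-c\,\theta^2+O(\theta^4)$ and tracking the boundary offsets exactly yields $d-\lambda\le \frac{(d-1)\pi^2}{4(n+(d-1)/2)^2}+O(n^{-4})$. The shift $(d-1)/2$ comes from the effective length contributed by the end gadgets. I would do this with an explicit transfer matrix computation; this refines Liu's construction behind Theorem~\ref{thm:liu}.

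\emph{Upper bound.} Let $G$ be extremal with Perron vector $x$, normalised so $\max x_v=1$. Then
\[ (d-\lambda)\sum_v x_v^2=\sum_v (d-d_v)x_v^2+\sum_{uv\in E}(x_u-x_v)^2. \]
From Theorem~\ref{thm:lsw-order}, $d-\lambda=\Theta(n^{-2})$. First we derive structure: total deficiency is $O(1)$, and $G$ has diameter $\Theta(n)$ with ``width'' $O(1)$. We obtain this by a BFS layering from a deficient vertex. Layers have bounded size, since otherwise a Cheeger/test-vector argument forces $d-\lambda\gg n^{-2}$ with a worse constant. We then show the deficient vertices all lie near one end, because two far-apart deficiencies behave like Dirichlet conditions at both ends and lose a factor 4. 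For odd $d$, parity forces every cut between consecutive layer groups to have odd size, and in particular size at least 1. The key inequality is that the edge cut between consecutive BFS layers has size $c_i$ with $c_i$ odd. Also, a layer of $s$ vertices carrying few cut edges needs many internal edges, so $s\ge d+1$ roughly whenever $c_i=1$. Then comparing with the discrete variational problem (a weighted path with conductances $c_i$ and masses $s_i$) gives $d-\lambda\ge \min \frac{\sum c_i (y_i-y_{i+1})^2}{\sum s_i y_i^2}$. This is a Sturm--Liouville problem whose optimum, under the constraints $\sum s_i=n$ and the odd-parity/degree counting, is achieved by $c_i=1$ with masses spread so that the effective length is $n+(d-1)/2$ in the $(d-1)$-normalised metric. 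Its eigenvalue is $\frac{(d-1)\pi^2}{4(n+(d-1)/2)^2}+O(n^{-4})$.

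\emph{Main obstacle.} The hard step is the sharp $O(n^{-4})$ comparison in the upper bound. We must show that deviations of the local structure from the optimal ladder, such as cut size $3$ somewhere, extra deficiency, or gadget variations, each cost at least an $\Omega(n^{-3})$ change in $d-\lambda$. Such costs could otherwise spoil the $n^{-3}$ coefficient. I would handle this by a perturbation argument. Near the deficient end $x$ is $O(1/n)$ and the recurrence is nearly linear. Deep in the bulk, $x$ varies by $O(1/n)$ per block. So each local defect at position $t$ changes the Rayleigh quotient by an explicit $\Theta(\text{defect}\cdot x_t'^2)$ term, which is positive and nonnegligible. Controlling the boundary layer near the deficiency, where the effective length shift $(d-1)/2$ originates, to precision $O(1)$ in length is the delicate part.
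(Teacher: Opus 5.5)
Your construction half is acceptable in outline once the gadget is fixed. The paper chains copies of $K_{d+1}$ minus an edge by single bridges, puts the deficiency at one end, caps the other end with a graph $F_s$, and evaluates a Rayleigh quotient through Lemma~\ref{lem:path-minimum}. The offset comes out as $K/2$ for every admissible cap size and both parities of $n$.

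The genuine gap is in the other direction: the lower bound on $d-\lambda_1(G)$ for an arbitrary $G$.

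\begin{itemize}
\item Your decisive inequality $d-\lambda\ge\min\sum c_i(y_i-y_{i+1})^2/\sum s_iy_i^2$ over BFS layers points the wrong way. Restricting the Rayleigh quotient of $dI-A(G)$ to layer-constant functions produces exactly such a layered quotient, but as an \emph{upper} bound on $d-\lambda$.
\item The direction you need would require the Perron vector to be nearly constant on layers, and nothing in your outline establishes this. It also fails layer by layer. The energy of the $c_i$ edges between $L_i$ and $L_{i+1}$ need not dominate $c_i(y_{i+1}-y_i)^2$ for layer averages, because vertices of $L_i$ with no neighbour in $L_{i+1}$ move the average without affecting that energy.
\item The parity claim is false. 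For odd $d$ one has $|\partial S|\equiv|S|+\sum_{v\in S}(d-d_G(v))\pmod 2$, so cuts need not be odd; a single vertex of degree $d-1$ has a cut of size $d-1$.
\item Bounded layer width cannot be forced by a ``worse constant'' argument. A large regular piece (of size $o(n)$) at the capped end removes length but adds the same end mass: $q$ drops and $b$ rises by the same amount in Lemma~\ref{lem:path-minimum}. So the leading constant is unchanged.
\item The claim that the layered optimum is ``$c_i=1$ with masses spread'' is just the theorem restated.
\item The precision target is wrong. Since $\mu\approx\kappa/L_{\mathrm{eff}}^2$, an $O(n^{-4})$ error needs $L_{\mathrm{eff}}$ to within $O(1/n)$. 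Precision $O(1)$ does not even fix the $n^{-3}$ coefficient.
\item Not every deviation from the ladder costs $\Omega(n^{-3})$; the cap size, for instance, costs nothing. What is needed is one lower bound valid uniformly for all $G$.
\end{itemize}

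The missing idea is a comparison that needs no structure theory. The paper orders the vertices by Perron value and adjoins a vertex $v_0$ absorbing the deficiencies. This gives $\mu=\sum_{i,j}|\partial S_i\cap\partial S_j|\,y_iy_j$, with nested level cuts and all $y_i\ge0$. The diagonal bound $\sum q_iy_i^2$ is too weak exactly at small cuts such as bridges. So indices with $q_i<K$ are grouped into blocks of length $d$ or $d+1$. The contracted block multigraphs have effective conductance $C_J\ge K/\ell_J$ (Lemma~\ref{lem:block-energy}), with equality only for the $K_{d+1}$-minus-an-edge pattern. This yields $K\mathcal E_P(g)\le\mu$ for the piecewise-linear interpolant $g$.

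The offset $K/2$ in the lower bound then comes from a norm correction. On each equality block, the interior values of $x$ exceed the linear interpolant by $c\,t_J$ in total, where $c=(d-2)/2$. These excesses telescope to an extra end mass $c\,g_n^2$, and strict blocks pay for any mismatch out of their surplus $\gamma t_J^2$ (Proposition~\ref{prop:refined}). Without this correction, the same path comparison would give offset $1/2$ instead of $K/2$, which contradicts the construction. So the lower bound hinges on controlling the within-block profile of $x$ at order $n^{-3}$ for a general $G$, and your outline has no mechanism for that.
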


Our second main result determines the degree sequence of extremal graphs
of sufficiently large order.
\begin{theorem}
\label{thm:degree-sequence}
Let $d\ge3$ be a fixed odd integer. Then there exists $n_0=n_0(d)$
such that every extremal graph $G\in\mathcal G(n,d)$ with $n\ge n_0$
has degree sequence
\begin{equation}\label{eq:degrees}
 \pi(G)=
 \begin{cases}
 (d,\ldots,d,d-1),& n\ \textnormal{is odd},\\
 (d,\ldots,d,1),& n\ \textnormal{is even}.
 \end{cases}
\end{equation}
\end{theorem}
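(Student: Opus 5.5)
We will deduce Theorem~\ref{thm:degree-sequence} from the second-order expansion~\eqref{eq:main} by showing that any deviation from the predicted degree sequence costs at least $c_d n^{-3}$ in spectral radius. This loss is larger than the $O_d(n^{-4})$ error in Theorem~\ref{thm:asymptotic}.

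Let $G$ be extremal and $x$ its positive unit Perron vector, so that $d-\lambda_1(G)=\sum_v (d-d_G(v))x_v^2+\sum_{uv\in E}(x_u-x_v)^2$. The first step is structural. Since $d-\lambda_1=O(n^{-2})$, a standard argument shows that $x$ is ``slowly varying'' along the graph, so $G$ is long and path-like.
\begin{itemize}
\item The total deficiency $\sigma=nd-2|E(G)|$ is bounded by a constant depending on $d$.
\item All deficient vertices lie within bounded distance of one end of a BFS layering of length $\sim n/\text{const}$.
\item $x$ is small there, of order $n^{-3/2}$ up to constants.
\item Going outward, $x$ grows approximately like a sine half-wave, i.e.\ a quarter-period profile as in Liu's construction.
\end{itemize}
Parity forces $\sigma$ odd when $n$ is odd and $\sigma$ even (and positive) when $n$ is even, since $nd$ has the parity of $n$ for odd $d$.

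The second step is to compare $G$ with a continuum model. We think of $G$ as a chain of bounded-size ``blocks'' (cuts of size one or small edge cuts between consecutive layers). The Perron equation then reduces to a discrete Sturm--Liouville problem $\lambda x_i = (d-1)\text{-weighted}$ second difference plus a boundary condition at the deficient end and a ``free'' (Neumann-like) condition at the far regular end. The leading term $(d-1)\pi^2/(4L^2)$ comes from the quarter-wave with effective length $L$. The $n^{-3}$ term is governed by an effective boundary shift $L=n+\frac{d-1}{2}-\delta(G)$, where $\delta(G)$ measures how many vertices are ``wasted'' near the deficient end relative to the optimum. One computes $\delta$ for the two candidate end-gadgets: the degree-$(d-1)$ vertex for odd $n$, and the pendant vertex attached to a $K_{d+1}$-minus-matching-type block for even $n$. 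These constructions match the upper bound in~\eqref{eq:main} and give $\delta=0$.

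The main step, and the main obstacle, is to prove that every other bounded configuration near the deficient end has $\delta\ge c_d>0$. Such configurations include $\sigma\ge2$ for odd $n$, the sequence $(d,\dots,d,d-2)$ or two vertices of degree $d-1$ for even $n$, or a defect placed away from the end. Any such configuration then yields $\lambda_1(G)\le d-\frac{(d-1)\pi^2}{4(n+\frac{d-1}{2}-c_d)^2}+O(n^{-4})$. This is strictly less than the value given by~\eqref{eq:main} for large $n$, contradicting extremality.

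To carry out this step, I would first try an energy (Rayleigh quotient) comparison. Since $x_v=\Theta(n^{-3/2})$ near the end, a deficiency of $k$ at depth $t$ costs roughly $k\,x_v^2\approx k\,t^2 n^{-3}$. Moving the boundary inward by one layer shifts $L$ by $\ge 1/(\text{layer width})$. One then needs a finite case analysis over end-gadgets of bounded size, possible because $\sigma$ and the number of vertices in the relevant region are bounded in terms of $d$. The analysis checks that the optimal gadget is unique up to the stated degree sequence. The delicate part is making the discrete-to-continuum error genuinely $O(n^{-4})$ uniformly, which I would handle via an explicit transfer-matrix solution on the regular periodic part of the chain.
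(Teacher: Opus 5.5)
\textbf{There is a genuine gap.} Given Theorem~\ref{thm:asymptotic}, the whole content of the theorem is your ``main step'': every $G$ with $\mu=d-\lambda_1(G)=O_d(n^{-2})$ and a non-conforming degree sequence must satisfy $\mu\ge\kappa/(n+K/2-c_d)^2-O_d(n^{-4})$. You leave this step as a plan, and the route you sketch does not supply it.

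The lower bound has to hold whatever the bulk of $G$ looks like. Your transfer-matrix computation presupposes that, away from the deficient end, $G$ is a periodic chain of known blocks. An arbitrary graph with $\mu=O_d(n^{-2})$ has no such structure a priori; that is closer to the conclusion than to a hypothesis. Any bounded piece of the bulk that beat coefficient $K$ per unit length would lower $\mu$ by order $n^{-3}$. That is the same order as the end shift $\delta(G)$ you want to detect. So a finite case analysis of a bounded end region cannot close the argument. What you need is a statement, uniform over all graphs, that no part of $G$ does better than the weighted path with coefficient $K$, with total error $O_d(n^{-4})$ and the correct far-end boundary term. Your preliminary structural claims are also only asserted: bounded total deficiency $\sigma$, all deficient vertices within bounded depth of one end, and a sine profile. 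For instance, $\mu\ge\sum_v\delta_G(v)x_v^2$ with $x_v$ of order $n^{-3/2}$ does not by itself bound $\sigma$.

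The missing idea is the paper's universal path comparison.
\begin{itemize}
\item Order the vertices by Perron entries. Encode deficiencies as $\delta_G(v)$ parallel edges to an auxiliary vertex $v_0$ with value $0$, so that $q_0=\sigma$.
\item Use the cut sizes $q_i=|\partial S_i|$ to partition $[0,n)$ into blocks of length $1$, $d$ or $d+1$ with effective conductances $C_J\ge K/\ell_J$ (Lemmas~\ref{lem:block-energy} and~\ref{lem:block-partition}). There is a uniform gap $\gamma(d)$ whenever the inequality is strict.
\item In the norm correction, each non-strict block (the equality case, a copy of $K_{d+1}$ minus an edge) contributes $c(x_b^2-x_a^2)$ up to negligible errors. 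These terms telescope to the far-end term $cg_n^2$.
\item This gives $\mu\ge\kappa/(n+K/2)^2-O_d(n^{-4})$ with no structural assumption on the bulk (Proposition~\ref{prop:refined}, Corollary~\ref{cor:path-baseline}).
\end{itemize}
No localization of defects is needed after that. A non-conforming degree sequence means $\sigma\ge d$, or $2\le\sigma\le d-2$, or $\sigma=K$ with at least two deficient vertices. In each case the block starting at $0$ or $1$ is strict (Lemma~\ref{lem:initial-strict}); in the last case this is because $q_1=2d-1-2\delta_G(v_1)$ is odd and at least $3$. A strict block there shifts the effective length by $r_*=\gamma/(2K+\gamma)$ (Lemma~\ref{lem:path-strict}). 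The resulting gain $2\kappa r_*n^{-3}$ contradicts Theorem~\ref{thm:asymptotic}.
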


The rest of the paper is organized as follows.
In Section~\ref{sec:cuts}, we reduce the spectral problem to a
one-dimensional variational problem on a path. This problem is analyzed
in Section~\ref{sec:path}, where we derive a general lower bound on
$d-\lambda_1(G)$ together with a strengthened version needed for the
degree-sequence argument.

In Section~\ref{sec:finish}, we construct a family of graphs $G_n$
which yields a matching upper bound on $d-\rho(n,d)$. Combining this
construction with the general lower bound proves
Theorem~\ref{thm:asymptotic}. We then show that any extremal graph
whose degree sequence is not listed in~\eqref{eq:degrees} satisfies the
stronger lower bound. For all sufficiently large $n$, this implies
$\lambda_1(G)<\lambda_1(G_n)$, contradicting the extremality of $G$ and proving
Theorem~\ref{thm:degree-sequence}.

Finally, Appendix~\ref{app:ordered-cut} proves the coordinate estimate
used in Lemma~\ref{lem:block-energy} and derives an explicit lower
bound on $d-\lambda_1(G)$ valid for every $G\in\mathcal G(n,d)$.
\section{Comparison with a path}\label{sec:cuts}
The aim of this section is to prove the lower bound
\eqref{eq:weighted-comparison} on $d-\lambda_1(G)$.
Fix an odd integer $d\ge3$, let $G\in\mathcal G(n,d)$, and put
$K:=d-1$, $c:=(d-2)/2$, and $\mu:=d-\lambda_1(G)>0$.
Let $x$ be the unit Perron vector of $G$.
Then $A(G)x=\lambda_1(G)x$, $x_v>0$ for every $v\in V(G)$, and
$\sum_{v\in V(G)}x_v^2=1$.
Choose a labelling $V(G)=\{v_1,\ldots,v_n\}$ satisfying
$x_{v_1}\le\cdots\le x_{v_n}$, and write $x_i:=x_{v_i}$ for
$1\le i\le n$ and $x_0:=0$.
Such eigenvector orderings are a standard tool in spectral extremal
problems; see, for example,
\cite{Fiedler,Guiduli,BGI,AGI,AGquartic,AGdiameter,Zhu}.

For integers $0\le a<b\le n$, write $J=[a,b):=\{a,\ldots,b-1\}$,
$\ell_J:=b-a$, and $t_J:=x_b-x_a$.
Given a partition $\cJ=\{[a_{j-1},a_j):1\le j\le r\}$ with
$0=a_0<\cdots<a_r=n$, define
\begin{equation}\label{eq:interpolation}
 g_{a+s}:=x_a+\frac{s}{\ell_J}t_J
 \qquad(J=[a,b)\in\cJ,\quad s=0,\ldots,\ell_J).
\end{equation}
For $1\le j<r$, both formulas at $a_j$ give $g_{a_j}=x_{a_j}$.
Thus $g$ is well defined, with $g_0=0$ and $g_n=x_n$.
Let $P$ be the path on $\{0,\ldots,n\}$ with edges $\{i,i+1\}$,
$0\le i<n$, and put $\cE_P(z):=\sum_{i=0}^{n-1}(z_{i+1}-z_i)^2$
for $z\in\mathbb R^{n+1}$.
We shall choose $\cJ$, a subfamily
$\cJ_{\mathrm{str}}\subseteq\cJ$, and a constant
$\gamma=\gamma(d)>0$ such that, if
$\mu=O_d(n^{-2})$, then, for all sufficiently large $n$,
\begin{equation}\label{eq:weighted-comparison}
 \mu\ge
 \frac{K\cE_P(g)+(\gamma/2)\sum_{J\in\cJ_{\mathrm{str}}}t_J^2}
      {\sum_{i=1}^n g_i^2+c g_n^2}
 -O_d(n^{-4}).
\end{equation}
A uniform formulation is given in Corollary~\ref{cor:weighted-norm}.
\subsection{Local comparisons and the choice of intervals}
\label{subsec:block-partition}
For $v\in V(G)$, put $\delta_G(v):=d-d_G(v)$. Expanding
$x^{\trans}(dI-A(G))x$ gives
\begin{equation}\label{eq:energy-identity}
 \mu=\sum_{uv\in E(G)}(x_u-x_v)^2
      +\sum_{v\in V(G)}\delta_G(v)x_v^2.
\end{equation}
To express $\sum_v\delta_G(v)x_v^2$ as edge energy, we construct the
graph $G^+$ from $G$ by adjoining a new vertex $v_0$ and adding
$\delta_G(v)$ edges between $v_0$ and each $v\in V(G)$.
We extend $x$ to $V(G^+)$ by setting $x_{v_0}=x_0=0$.
Distinct edges with the same pair of endpoints are called
\emph{parallel edges}. Thus $G^+$ is a loopless multigraph.
Throughout this section, parallel edges are retained as distinct edges
and are counted separately in degrees, cuts, and energy sums.
For any such graph $H$ and any function $z:V(H)\to\mathbb R$, define
$\cE_H(z):=\sum_{uv\in E(H)}(z_u-z_v)^2$,
where edges are counted with multiplicity. We call $\cE_H(z)$ the
\emph{edge energy} of $z$ on $H$.
Equation~\eqref{eq:energy-identity} now gives $\cE_{G^+}(x)=\mu$.
Since $\sum_v\delta_G(v)\ge1$ and $G$ is connected, $G^+$ is connected.
Moreover, $G^+[V(G)]=G$ and $d_{G^+}(v)=d$ for every $v\in V(G)$.

For $S\subseteq V(G^+)$, let $\partial S$ be the set of edges with
exactly one endpoint in $S$. Put $S_i:=\{v_0,\ldots,v_i\}$ and
$q_i:=|\partial S_i|$ for $0\le i\le n$, and put
$y_i:=x_{i+1}-x_i\ge0$ for $0\le i<n$.
Thus $q_i\ge1$ for $i<n$ and $q_n=0$.
An edge $v_pv_q\in E(G^+)$ with $p<q$ belongs to $\partial S_i$ exactly when
$p\le i<q$. Expanding $(x_q-x_p)^2=(\sum_{i=p}^{q-1}y_i)^2$
and summing over $E(G^+)$ gives
\begin{equation}\label{eq:cut-energy}
 \cE_{G^+}(x)
 =\sum_{i,j=0}^{n-1}|\partial S_i\cap\partial S_j|\,y_i y_j.
\end{equation}
Since $y_i\ge0$, the diagonal terms give
$\mu\ge\sum_{i=0}^{n-1}q_i y_i^2$. If $q_i\ge K$, the term $q_i y_i^2$ already bounds $Ky_i^2$; if
$q_i<K$, we instead choose an interval $J$ with $i\in J$ and seek a
lower bound of the form $K t_J^2/\ell_J$.

Fix an interval $J=[a,b)\subseteq[0,n)$.
We construct the graph $H_J$ from $G^+$ by identifying
$v_0,\ldots,v_a$ into a single vertex $a$ and
$v_b,\ldots,v_n$ into a single vertex $b$, while relabelling
$v_i$ as $i$ for $a<i<b$. We delete the loops created by the
identifying and retain all parallel edges. Since $G^+$ is connected,
so is $H_J$.

The edges incident with $a$ in $H_J$ correspond exactly to
$\partial S_a$, and for each $a<i<b$, the degree of $i$ in
$H_J$ is the same as the degree of $v_i$ in $G^+$. Hence
$d_{H_J}(a)=q_a$ and $d_{H_J}(i)=d$ for $a<i<b$.
Moreover,
$H_J-\{a,b\}\cong G[\{v_i:a<i<b\}]$,
and hence $H_J-\{a,b\}$ is simple, but need not be connected.

If an edge $v_pv_q\in E(G^+)$ with $p<q$ is not turned into a loop
under the above identification, then its contribution to the energy of
$H_J$ is
\[
 \bigl(x_{\min\{b,q\}}-x_{\max\{a,p\}}\bigr)^2
 =
 \left(\sum_{k\in J\cap[p,q)}y_k\right)^2.
\]
For every edge that is turned into a loop, the sum on the right-hand
side is zero. For brevity, write $\cE_J(x):=\cE_{H_J}(x)$.
Expanding and summing
therefore gives
\begin{equation}\label{eq:block-graph-energy}
 \cE_J(x)
 =\sum_{ij\in E(H_J)}(x_i-x_j)^2
 =\sum_{\substack{v_pv_q\in E(G^+)\\p<q}}
   \left(\sum_{k\in J\cap[p,q)}y_k\right)^2.
\end{equation}
Consequently, with the boundary values $x_a$ and $x_b$ fixed,
$\cE_J(x)$ is bounded below by the minimum energy over all choices of
the interior values $x_{a+1},\ldots,x_{b-1}$ on $H_J$.

For a connected graph $H$ with two distinct specified vertices $L,R$,
allowing parallel edges but no loops, define
\[
 C(H):=\min_{\substack{w:V(H)\to\mathbb R\\w_L=1,\ w_R=0}}
              \sum_{uv\in E(H)}(w_u-w_v)^2.
\]
\Needspace{9\baselineskip}

The specified vertices $L$ and $R$ are understood from the context.
The following elementary lemma allows arbitrary boundary values and
also gives bounds on the coordinates of the minimizer that will be
used in the norm comparison.
\begin{lemma}\label{lem:prescribed-values}
For $A,B\in\mathbb R$,
\[
 \min_{\substack{w:V(H)\to\mathbb R\\w_L=A,\ w_R=B}}
       \cE_H(w)=C(H)(B-A)^2.
\]
The minimum is attained at a unique $w$, with
$\min\{A,B\}\le w_v\le\max\{A,B\}$ for every $v\in V(H)$.
\end{lemma}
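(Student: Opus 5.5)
The plan is to treat the case $A=B$ on its own and then reduce the general case to $C(H)$ by an affine change of variables. If $A=B$, the constant function $w\equiv A$ has zero energy, so the minimum is $0=C(H)(B-A)^2$. It is attained uniquely: $\cE_H(w)=0$ forces $w_u=w_v$ on every edge, and since $H$ is connected, $w$ is constant and therefore equal to $A$. The coordinate bounds then hold trivially.

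For $A\ne B$, use the bijection $w\mapsto \tilde w:=(w-B)/(A-B)$. It maps functions with $w_L=A$, $w_R=B$ onto functions with $\tilde w_L=1$, $\tilde w_R=0$, and
\[
 \cE_H(w)=(A-B)^2\,\cE_H(\tilde w).
\]
So the constrained minimum is $(B-A)^2C(H)$, provided $C(H)$ is attained. Minimizers correspond exactly, so uniqueness and the coordinate bounds for general $A,B$ follow from the same statements for $A=1$, $B=0$, because the affine map $t\mapsto B+(A-B)t$ sends $[0,1]$ onto $[\min\{A,B\},\max\{A,B\}]$.

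It remains to treat the normalized problem, with $U:=V(H)\setminus\{L,R\}$ as the free coordinates.

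\emph{Existence.} The energy is a convex quadratic in the variables $w|_U$. I would show it is coercive. Every vertex $u\in U$ is joined to $L$ by a path, so $|w_u-1|$ is bounded by the path length times $\sqrt{\cE_H(w)}$. Hence sublevel sets are bounded, and a minimizer exists.

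\emph{Uniqueness.} The quadratic form $\cE_H$ restricted to variations $h$ vanishing on $\{L,R\}$ is positive definite. Indeed, $\cE_H(h)=0$ forces $h$ to be constant on $H$ by connectivity, and then $h=0$ because $h_L=0$. Strict convexity gives a unique minimizer.

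\emph{Coordinate bounds.} For the minimizer $w$, the truncation $w'_v:=\min\{1,\max\{0,w_v\}\}$ keeps the boundary values $1$ and $0$. Clamping is $1$-Lipschitz, so $|w'_u-w'_v|\le|w_u-w_v|$ on every edge, and hence $\cE_H(w')\le\cE_H(w)$. Uniqueness then forces $w'=w$, so $0\le w_v\le1$ for all $v$.

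An alternative route to the bounds is the harmonic (maximum principle) argument. Stationarity gives that each $w_u$ with $u\in U$ is the multiplicity-weighted average of its neighbours. The truncation argument avoids that step.

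I expect no real obstacle here. The only points needing care are the degenerate case $A=B$ and the role of connectivity, which is what makes the form definite and gives uniqueness. Parallel edges cause no trouble, since they only add weights to the energy.
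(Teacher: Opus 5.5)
Your proof is correct and follows essentially the same route as the paper: the $A=B$ case via connectivity, the affine change of variables to the normalized problem, uniqueness from the definiteness of $\cE_H$ on variations vanishing at $L$ (the paper writes this as $\cE_H((h+z)/2)=C(H)-\tfrac14\cE_H(h-z)$), and the coordinate bounds via the clamping map. The only difference is cosmetic: the paper clamps first, reducing to the compact cube $[0,1]^{|V(H)|-2}$ to get existence and the bounds together, whereas you get existence by coercivity and then apply clamping with uniqueness.
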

\begin{proof}
Let $z:V(H)\to\mathbb R$ satisfy $z_L=1$ and $z_R=0$.
Replacing each $z_v$ by $\min\{1,\max\{0,z_v\}\}$ preserves these
values and cannot increase any $|z_p-z_q|$. Hence, in computing $C(H)$, it suffices to restrict to functions
satisfying $0\le z_v\le1$ for all $v\in V(H)$.
The function $\cE_H$ is continuous in these $|V(H)|-2$ coordinates,
so it attains its minimum on $[0,1]^{|V(H)|-2}$.
Let $h$ attain this minimum; then $0\le h_v\le1$ for all $v$.
If $z$ is another minimizer, then
$C(H)\le\cE_H\!\left((h+z)/2\right)=C(H)-(1/4)\cE_H(h-z)$.
Hence $\cE_H(h-z)=0$. Since $H$ is connected, $h-z$ is constant on
$V(H)$. As $h_L=z_L=1$, this constant is zero, and therefore $h=z$.

For $A\ne B$, the affine change of variables
\(
 w=B+(A-B)z
\)
maps the boundary conditions $z_L=1$, $z_R=0$ to
$w_L=A$, $w_R=B$, and
$\cE_H(w)=(B-A)^2\cE_H(z)$. The unique minimizer is therefore
$w=B+(A-B)h$, which has the stated coordinate bounds.
For $A=B$, the function $w_v=A$ has energy zero. Conversely,
$\cE_H(w)=0$ gives $w_p=w_q$ on each edge $pq$; along a path from
$L$ to $v$, this yields $w_v=w_L=A$.
\end{proof}

For $H_J$, take $a,b$ as the specified vertices and put
$C_J:=C(H_J)$. By Lemma~\ref{lem:prescribed-values}, if the values
at $a$ and $b$ are fixed to be $x_a$ and $x_b$, respectively, then
the minimum energy on $H_J$ is
$C_J(x_b-x_a)^2=C_Jt_J^2$.
On the other hand, by~\eqref{eq:block-graph-energy}, the values
$x_a,\ldots,x_b$ have energy $\cE_J(x)$ on $H_J$. Therefore,
\[
 \cE_J(x)
 =\sum_{ij\in E(H_J)}(x_i-x_j)^2
 \ge
 \min_{\substack{w:V(H_J)\to\mathbb R\\
                  w_a=x_a,\ w_b=x_b}}
       \sum_{ij\in E(H_J)}(w_i-w_j)^2
 =C_Jt_J^2.
\]

Thus $\cE_J(x)$ is the energy of the original coordinates on $H_J$,
whereas $C_Jt_J^2$ is the minimum energy subject to the boundary
values $x_a$ and $x_b$.
It remains to identify intervals for which
\(C_J\ge K/\ell_J\).
If $J=\{a\}$, then $H_J$ consists of $q_a$ parallel edges between
$a$ and $a+1$, and hence $C_J=q_a$. Thus any index with $q_a\ge K$
can be handled by a singleton interval.

It remains to consider the case $q_a<K$. The following lemma gives the
required bounds when $\ell_J=d$ or $d+1$, equivalently, when
$H_J-\{a,b\}$ has $d-1$ or $d$ vertices.
\begin{lemma}\label{lem:block-energy}
Let $J=[a,b)$ be an integer interval with $0\le a<b\le n$.
If $\ell_J=d+1$ and $1\le q_a\le d-2$, then $C_J\ge K/(d+1)$.
If $\ell_J=d$ and $2\le q_a\le d-2$, then $C_J>K/d$.
In the first case, $C_J=K/(d+1)$ if and only if $q_a=1$ and,
for some $i_*$ with $a<i_*<b$,
\[
 E(H_J)=\{ij:a<i<j<b\}\cup\{ai_*\}
                    \cup\{ib:a<i<b,\ i\ne i_*\},
\]
with each listed edge occurring once.
\end{lemma}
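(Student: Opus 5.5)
The plan is to read $C(H)$ as an effective conductance. With $w_L=1$, $w_R=0$, the quantity $C(H)$ is the effective conductance between $L$ and $R$ in the unit-conductance network $H$, parallel edges counted separately. By Thomson's principle,
\[
 C(H)^{-1}=\min\Bigl\{\textstyle\sum_{e}f_e^2 : f \text{ a unit flow from } L \text{ to } R\Bigr\}.
\]
So a lower bound $C_J\ge K/\ell_J$ amounts to exhibiting a unit $a$--$b$ flow $f$ in $H_J$ with energy $\sum_e f_e^2\le \ell_J/K$. Alternatively one can bound the Dirichlet energy of the minimizer from Lemma~\ref{lem:prescribed-values} from below; the flow formulation is cleaner for the inequality.

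The first step is to record the structure of $H_J$. Let $m=\ell_J-1$ be the number of interior vertices. Each interior vertex has degree $d$ in $H_J$, and the interior graph $H_J-\{a,b\}$ is simple on $m$ vertices. For an interior vertex $i$, let $\alpha_i$ and $\beta_i$ be its numbers of edges to $a$ and to $b$. Then
\[
 \alpha_i+\beta_i=d-\deg_{\mathrm{int}}(i)\ge d-(m-1).
\]
\begin{itemize}
\item When $\ell_J=d+1$ we have $m=d$, so every interior vertex has at least one boundary edge. Summing gives $q_a+q_b\ge d$, hence $q_b\ge 2$.
\item When $\ell_J=d$ we have $m=d-1$, so every interior vertex has at least two boundary edges, and $q_b\ge 2(d-1)-q_a\ge d$.
\end{itemize}

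The second step is the flow construction. Send $1/q_a$ along each of the $q_a$ edges at $a$. Each vertex $i$ with $\beta_i\ge1$ sends its received flow, together with flow routed to it, to $b$ along its own $\beta_i$ edges. A vertex $i$ with $\alpha_i>0$ and few $b$-edges spreads its surplus along interior edges to vertices that have $b$-edges.

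This is guided by the conjectured extremal graph: interior $K_d$, one $a$-edge at $i_*$, and one $b$-edge at every other vertex. There the optimal flow sends $1$ into $i_*$, then $1/(d-1)$ along each interior edge $i_*j$, then $1/(d-1)$ from each $j$ to $b$. Its energy is $1+\tfrac{1}{d-1}+\tfrac{1}{d-1}=\tfrac{d+1}{d-1}$, giving $C_J=(d-1)/(d+1)$; this matches a direct potential computation ($w_{i_*}=2/(d+1)$, $w_j=1/(d+1)$).

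For general configurations I would use a two-stage spreading. Flow into each $a$-neighbour is split evenly among its interior neighbours and its own $b$-edges, and then collected at $b$. I would then bound the energy via convexity ($\sum f_e^2$ is minimized by equal splits) together with the degree counts above. When $q_a\ge2$, the first stage already costs only $1/q_a\le 1/2$ instead of $1$, which gives slack.

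The third step is the case split. For $\ell_J=d$ and $2\le q_a\le d-2$, each interior vertex has at least two $b$- or $a$-edges. This gives at least $d$ edges into $b$, which should make the energy strictly below $d/(d-1)$. For $\ell_J=d+1$, I would split into $q_a=1$ and $q_a\ge2$. For $q_a\ge 2$ I expect strict inequality. For $q_a=1$, the single $a$-edge carries unit flow at cost $1$, and the remaining problem is a flow from $i_*$ to $b$ in a graph where $i_*$ has $d-1$ further edges. I would aim to show that the $i_*$--$b$ effective resistance is at most $2/(d-1)$, with equality only for the listed graph.

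Equality then follows from uniqueness of the minimizer in Lemma~\ref{lem:prescribed-values}, together with tracking where each inequality is tight. Tightness should force $i_*$ to have all $d-1$ remaining edges interior, and every other vertex to have exactly one $b$-edge and full interior degree $d-1$. This forces the interior graph to be $K_d$.

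The main obstacle is the $q_a=1$, $\ell_J=d+1$ case when the interior graph is far from complete. Then $i_*$ may have several parallel edges to $b$, or other vertices have many $b$-edges but sparse interior connections. In that situation the naive two-stage flow may not be good enough. I would first try Rayleigh monotonicity: show that any admissible configuration can be transformed toward the extremal one (moving an edge endpoint from $b$ to an interior vertex, or vice versa) without increasing the conductance. If that fails, I would fall back to a direct lower bound on the minimizer's energy using the ordering $w_v$ and a cut decomposition analogous to \eqref{eq:cut-energy}.
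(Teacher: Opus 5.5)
Your Thomson-principle reformulation is correct. A unit $a$--$b$ flow of energy at most $\ell_J/K$ gives $C_J\ge K/\ell_J$, and one of energy strictly below $\ell_J/K$ gives strictness. However, the proposal never produces such a flow beyond the extremal example. Moreover, the two-stage flow you describe is not a valid flow in exactly the cases you treat as easy.

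When $q_a\ge2$, an $a$-neighbour may have no edge to $b$:
\begin{itemize}
\item for $\ell_J=d+1$, take $d=5$ with interior $K_5$, three interior vertices each joined once to $a$, and the other two each joined once to $b$;
\item for $\ell_J=d$, take a vertex with $p_j=2$ adjacent to all $d-2$ other interior vertices.
\end{itemize}
Flow that your first stage pushes into such a vertex cannot be ``collected at $b$'' in a second stage, so conservation fails. Any repair must route further through the interior, where contributions from different sources superpose, and then the energy is no longer the sum of simple terms you planned to control by convexity. The counting facts ($q_b\ge2$, at least $d$ edges into $b$) do not by themselves bound an effective resistance, especially since $H_J-\{a,b\}$ need not be connected.

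The Rayleigh fallback is also unavailable. Monotonicity concerns raising or lowering conductances, not moving edge endpoints, and such moves break $d_{H_J}(i)=d$ and the simplicity of the interior. The case $ab\in E(H_J)$, which you omit, is trivial: route everything along $ab$.

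The case you single out as the main obstacle is in fact immediate with your own flow. Suppose $q_a=1$ and the $a$-edge is $ai_*$.
\begin{itemize}
\item Send $1/(d-1)$ along each of the other $d-1$ edges at $i_*$.
\item Each interior $j\ne i_*$ has no $a$-edge and at most $d-1$ interior neighbours, so it has a $b$-edge.
\item Each such $j$ receives $1/(d-1)$ through a single edge, so forwarding it to $b$ costs at most $1/(d-1)^2$.
\end{itemize}
The total energy is at most $1+2/(d-1)=(d+1)/(d-1)$. Equality holds only if $i_*$ is adjacent to all other interior vertices and each of them has exactly one $b$-edge, which forces the listed graph. Together with your direct computation, this settles $q_a=1$ completely.

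What is actually missing is the strict bound for $2\le q_a\le d-2$ in both lengths. This is also needed for the `only if' direction, which requires $q_a=1$ at equality.

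The paper obtains this bound on the potential side. For the minimizer with $w_a=1$, $w_b=0$ and $ab\notin E(H_J)$, one has $C_J=\sum_i p_i(1-w_i)\ge q_a(1-\max_i w_i)$. The optimality equations and the simplicity of the interior give $(d+1)w_i\le p_i+S$ with $S=\sum_i w_i$. From this, Lemma~\ref{lem:local-energy} derives $\max_i w_i\le\beta q_a/((\beta-1)d+q_a)$ with $\beta=d-\ell_J+3$. This yields $C_J\ge q_a(d-q_a)/(d+q_a)$ for $\ell_J=d+1$ and $C_J\ge 2q_a(d-q_a)/(2d+q_a)$ for $\ell_J=d$, both strictly above $K/\ell_J$ once $q_a\ge2$.

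To complete your route, you need an explicit flow for $q_a\ge2$ that handles $a$-neighbours without $b$-edges and comes with a quantitative energy bound. Alternatively, switch to a potential-side estimate of this kind.
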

\begin{proof}
If $ab\in E(H_J)$, then $C_J\ge1>K/d>K/(d+1)$.
Assume $ab\notin E(H_J)$, and let $w$ minimize
$\cE_{H_J}(w)=\sum_{ij\in E(H_J)}(w_i-w_j)^2$ with $w_a=1$, $w_b=0$.
Lemma~\ref{lem:prescribed-values} gives $0\le w_i\le1$ for $a\le i\le b$.
For $a<i<b$, let $p_i$ count the edges joining $a$ to $i$;
then $\sum_{a<i<b}p_i=q_a$.
Since $d_{H_J}(i)=d$ for $a<i<b$, the optimality equations for $w$ give
$dw_i-p_i=\sum_{\substack{a<j<b\\ij\in E(H_J)}}w_j$ for $a<i<b$.
Consequently,
\[
 \sum_{a<i<b}(d w_i^2-p_iw_i)
 =\sum_{a<i<b}\sum_{\substack{a<j<b\\ij\in E(H_J)}}w_iw_j
 =2\sum_{\substack{a<i<j<b\\ij\in E(H_J)}}w_iw_j.
\]
Using $w_a=1$, $w_b=0$, and $ab\notin E(H_J)$, we expand
$\cE_{H_J}(w)$ as
\[
 \begin{aligned}
 C_J
 &=q_a+\sum_{a<i<b}(d w_i^2-2p_iw_i)
       -2\sum_{\substack{a<i<j<b\\ij\in E(H_J)}}w_iw_j\\
 &=\sum_{a<i<b}p_i(1-w_i)
 \ge q_a\left(1-\max_{a<i<b}w_i\right).
 \end{aligned}
\]
The graph $H_J-\{a,b\}$ is simple and has $\ell_J-1$ vertices.
Applying Lemma~\ref{lem:local-energy} in Appendix~\ref{app:local-energy}
to $\max_{a<i<b}w_i$, with $H=H_J$, $(L,R)=(a,b)$,
$m=\ell_J-1$, and $k=q_a$, gives
\[
 C_J\ge
 \begin{cases}
 \dfrac{q_a(d-q_a)}{d+q_a},&\ell_J=d+1,\\[5pt]
 \dfrac{2q_a(d-q_a)}{2d+q_a},&\ell_J=d.
 \end{cases}
\]
If $\ell_J=d+1$ and $q_a=1$, this gives $C_J\ge K/(d+1)$.
For $2\le q_a\le d-2$, we have $d\ge5$ and
$q_a(d-q_a)=2(d-2)+(q_a-2)(d-2-q_a)\ge2(d-2)$.
Thus $C_J\ge(d-2)/(d-1)>K/(d+1)$ when $\ell_J=d+1$, and
$C_J\ge4(d-2)/(3d-2)>K/d$ when $\ell_J=d$.

Suppose now that $\ell_J=d+1$ and $C_J=K/(d+1)$.
By the strict inequality established above for $q_a\ge2$, we must have
$q_a=1$.
Let $i_*$ be the unique neighbour of $a$. Then $p_{i_*}=1$ and
$p_i=0$ for $a<i<b$, $i\ne i_*$. Hence $C_J=1-w_{i_*}$, so
$w_{i_*}=2/(d+1)$.
Put $S:=\sum_{a<i<b}w_i$. By the optimality equations and the simplicity of
$H_J-\{a,b\}$,
\[
 dw_i
 =p_i+\sum_{\substack{a<j<b\\ij\in E(H_J)}}w_j
 \le p_i+S-w_i,
\]
and hence
$(d+1)w_i\le p_i+S$ for $a<i<b$.
At $i_*$ this yields $2\le1+S$; summing over the other $d-1$
indices yields $(d+1)(S-w_{i_*})\le(d-1)S$.
Hence $S=1$. Since $w_i\le1/(d+1)$ for $a<i<b$, $i\ne i_*$, and
$\sum_{a<i<b,\,i\ne i_*}w_i=(d-1)/(d+1)$, we obtain
$w_i=1/(d+1)$ for $a<i<b$, $i\ne i_*$.
For $a<i<b$, $i\ne i_*$, we therefore have
\[
 0=S-(d+1)w_i
   =\sum_{\substack{a<j<b,\ j\ne i\\ij\notin E(H_J)}}w_j.
\]
Since $w_j\ge1/(d+1)>0$ for $a<j<b$, every vertex $i$ with $a<i<b$ and
$i\ne i_*$ is adjacent to every other vertex of $H_J-\{a,b\}$.
Thus $H_J-\{a,b\}$ is complete.
Since $d_{H_J}(i)=d$ for every $a<i<b$ and $q_a=1$, it follows that
$i_*b\notin E(H_J)$, while for every $a<i<b$ with $i\ne i_*$ there is
exactly one edge between $i$ and $b$.
Together with $ab\notin E(H_J)$, this gives precisely the edge set
stated in the lemma.

Conversely, suppose that $H_J$ has the stated edge set.
Every permutation of the vertices $a<i<b$, $i\ne i_*$, extends to an automorphism of $H_J$ fixing $a$, $b$, and $i_*$. Lemma~\ref{lem:prescribed-values} therefore gives
$w_i=w_j$ for $a<i,j<b$ with $i,j\ne i_*$.
For $a<i<b$, $i\ne i_*$, the equations
$dw_{i_*}=1+(d-1)w_i$ and $2w_i=w_{i_*}$ give
$w_{i_*}=2/(d+1)$ and $w_i=1/(d+1)$.
Consequently $C_J=1-w_{i_*}=K/(d+1)$.
\end{proof}

The conclusion $C(H)\ge K/d$ need not hold when
$|V(H)\setminus\{L,R\}|=d-1$ and $d_H(L)=1$, even if every
interior vertex has degree $d$.
Indeed, let $F=K_{d-1}$, choose $v_*\in V(F)$, and form $H$ by
adjoining vertices $L,R$, the edges $Lv_*,v_*R$, and two edges
$vR$ for each $v\in V(F)\setminus\{v_*\}$.
Each $v\in V(F)$ satisfies $d_H(v)=d$, and $d_H(L)=1$.
By symmetry and the optimality equations, the minimizing function
with $w_L=1$ and $w_R=0$ satisfies
$w_{v_*}=3/(2d+2)$ and $w_v=1/(2d+2)$ for $v\ne v_*,L,R$.
Substitution in $\cE_H(w)$ gives $C(H)=(2d-1)/(2d+2)<K/d$.
This explains the different choices of interval length below: when
$q_a=1$ we use an interval of length $d+1$, whereas for
$2\le q_a<K$ the length-$d$ estimate applies.

We next show that these intervals can form a partition of $[0,n)$.
For $0\le i<j\le n$, put $U:=S_j\setminus S_i$ and $t:=j-i$.
Each vertex of $U\subseteq V(G)$ has degree $d$ in $G^+$.
Since $G^+[U]$ is simple and
$\partial U\subseteq\partial S_i\cup\partial S_j$,
\begin{equation}\label{eq:cut-spacing}
 q_i+q_j\ge|\partial U|
 =dt-2|E(G^+[U])|\ge t(d-t+1).
\end{equation}
If $q_i,q_j<K$ and $2\le t\le d-1$, this contradicts
$q_i+q_j\le2(d-2)<2(d-1)\le t(d-t+1)$; the last inequality follows
from $(t-2)(d-1-t)\ge0$.
For $1\le t\le d$, we also have
$t(d-t+1)\ge d$, since $t(d-t+1)-d=(t-1)(d-t)\ge0$.
Thus $q_i=1$, $q_j<K$, and $1\le|i-j|\le d$ would give
$d\le q_i+q_j\le d-1$, a contradiction.
Thus distinct $i,j\in[0,n)$ with $q_i,q_j<K$ satisfy
\[
 |i-j|=1\ \text{or}\ |i-j|\ge d,
 \qquad q_i=1\ \Longrightarrow\ |i-j|\ge d+1.
\]
Taking $j=n$ in~\eqref{eq:cut-spacing} also gives $q_i\ge d$
for $1\le n-i\le d$, because $q_n=0$.
Thus every index with $q_i<K$ satisfies $i\le n-d-1$.
These separation and endpoint bounds yield the following partition.
\begin{lemma}\label{lem:block-partition}
There is a partition $\cJ$ of $[0,n)$ into integer intervals such that
every $J=[a,b)\in\cJ$ satisfies one of
\[
 \begin{array}{ll}
 \ell_J=1,&q_a\ge K,\\
 \ell_J=d,&2\le q_a\le d-2,\\
 \ell_J=d+1,&1\le q_a\le d-2.
 \end{array}
\]
In particular, $\cE_J(x)\ge Kt_J^2/\ell_J$ for every $J\in\cJ$.
\end{lemma}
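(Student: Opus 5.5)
Let $B:=\{i\in[0,n):q_i<K\}$ be the set of ``bad'' indices, i.e.\ those with $1\le q_i\le d-2$. The plan is to build $\cJ$ greedily from left to right. Each $i\in B$ opens a long interval $[i,i+\ell)$ with $\ell\in\{d,d+1\}$. Every index not covered by such an interval gets a singleton $\{i\}$, which is admissible because $q_i\ge K$ there. The final inequality $\cE_J(x)\ge Kt_J^2/\ell_J$ then follows from $\cE_J(x)\ge C_Jt_J^2$ together with three facts: $C_J=q_a\ge K$ for singletons, and Lemma~\ref{lem:block-energy} gives $C_J>K/d$ for $\ell_J=d$ with $2\le q_a\le d-2$ and $C_J\ge K/(d+1)$ for $\ell_J=d+1$ with $1\le q_a\le d-2$.

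To handle runs of consecutive bad indices, I would use the separation property derived above. Distinct $i,j\in B$ satisfy $|i-j|=1$ or $|i-j|\ge d$, and if $q_i=1$ then $|i-j|\ge d+1$. Hence $B$ splits into maximal runs of consecutive integers, and any two runs are at distance at least $d$.

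\textbf{Runs containing an index with $q=1$.} If such an index $i$ exists, it has no bad neighbour within distance $d$. So it forms a singleton run, and I would assign it $[i,i+d+1)$. No other bad index lies in $(i,i+d]$, so every other index in the interval is simply absorbed. Since $i\le n-d-1$, we have $i+d+1\le n$.

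\textbf{Runs with all $q\ge2$.} I expect such a run to have length at most $2$. Indeed, three consecutive bad indices $i,i+1,i+2$ would give $|i-(i+2)|=2$, and this is forbidden when $2\le d-1$, i.e.\ for $d\ge3$. For a run $\{i\}$ I would use $[i,i+d)$, and for a run $\{i,i+1\}$ I would use $[i,i+1)$ as a singleton. This last choice is the delicate point, since $q_i<K$ makes the singleton inadmissible.

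\textbf{Fixing runs of length two.} I would instead use $[i,i+d+1)$ with left endpoint $a=i$, which is admissible since $1\le q_i\le d-2$. It covers $i+1,\dots,i+d$. The next bad index is at distance at least $d$ from $i+1$, hence at least $i+d+1$, so nothing bad is skipped. For a singleton run $\{i\}$ with $q_i\ge2$, the interval $[i,i+d)$ works because the next bad index is at least $i+d$. Alternatively, I could uniformly use length $d+1$ for singleton runs too, since Lemma~\ref{lem:block-energy} covers $1\le q_a\le d-2$ at that length, provided the next bad index is not exactly $i+d$. So the rule is:
\begin{itemize}
\item singleton run with $q_i\ge2$: use length $d$;
\item singleton run with $q_i=1$: use length $d+1$;
\item run $\{i,i+1\}$: use length $d+1$ starting at $i$.
\end{itemize}

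\textbf{Main obstacle.} The key bookkeeping step is to check that the intervals stay inside $[0,n)$ and are pairwise disjoint.
\begin{itemize}
\item \emph{Containment:} every bad index is at most $n-d-1$. So for a run $\{i,i+1\}$ we get $i+d+1\le n-d-1+d+1-1$, since $i+1\le n-d-1$; in every case the interval ends at or before $n$.
\item \emph{Disjointness:} consecutive runs are spaced at least $d$ apart, measured from the last element of one run to the first of the next. Each long interval ends exactly at or before (last element of its run)$+d$, with the single exception of the $q=1$ case. That case is covered by the stronger separation $\ge d+1$.
\end{itemize}
The remaining indices are filled by singletons, each with $q\ge K$.
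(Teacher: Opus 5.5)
Your proposal is correct and matches the paper's proof. You split the indices with $q_i<K$ into maximal runs of length at most $2$ and assign exactly the paper's intervals: $[i,i+d+1)$ for a singleton run with $q_i=1$ or for a run $\{i,i+1\}$, and $[i,i+d)$ for a singleton run with $q_i\ge2$. You then check containment via $i\le n-d-1$ and disjointness via the separation property, and fill the remaining indices with singletons having $q_i\ge K$. In a final write-up, drop the abandoned intermediate idea of making the first index of a two-element run a singleton block.
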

\begin{proof}
Let $B\subseteq[0,n)$ be a maximal nonempty integer interval with
$q_i<K$ for every $i\in B$.
The excluded distance $2$ gives $|B|\le2$.
For $a:=\min B$, set
\begin{equation}\label{eq:block-rule}
 J(B):=
 \begin{cases}
 [a,a+d+1),&B=\{a\},\ q_a=1,\\
 [a,a+d),&B=\{a\},\ 2\le q_a\le d-2,\\
 [a,a+d+1),&B=\{a,a+1\}.
 \end{cases}
\end{equation}
Let $B$ and $B'$ be two consecutive maximal intervals of this type,
with first indices $a<a'$. The
first case of~\eqref{eq:block-rule} gives $a'-a\ge d+1$.
In the second case, $a'-a\ge2$, and hence $a'-a\ge d$ by the
distance restriction above.
In the third case, $a'-(a+1)\ge2$, and hence
$a'-(a+1)\ge d$ by the same restriction, so $a'-a\ge d+1$.
Thus the intervals $J(B)$ are disjoint.
Taking $j=n$ in~\eqref{eq:cut-spacing} shows that every index
$i$ with $q_i<K$ satisfies $i\le n-d-1$. Hence
$a\le n-d-1$.
As $|J(B)|\le d+1$, we also have $J(B)\subseteq[0,n)$.

Add $\{i\}$ for every $i\in[0,n)\setminus\bigcup_BJ(B)$.
Each added singleton satisfies $q_i\ge K$, because every index with
$q_i<K$ belongs to some $B\subseteq J(B)$.
This gives the required partition; its energy bound follows from
Lemma~\ref{lem:block-energy} and $C_{\{a\}}=q_a$.
\end{proof}

Fix the partition constructed in the proof of
Lemma~\ref{lem:block-partition}, write
$\cJ=\{[a_{j-1},a_j):1\le j\le r\}$ with $0=a_0<\cdots<a_r=n$,
and define $g$ by~\eqref{eq:interpolation}.

By~\eqref{eq:cut-energy} and~\eqref{eq:block-graph-energy},
$\cE_J(x)=\sum_{i,j\in J}|\partial S_i\cap\partial S_j|\,y_i y_j$ for $J\in\cJ$.
Since the blocks in $\cJ$ are pairwise disjoint and $y_i\ge0$,
\begin{equation}\label{eq:sum-blocks}
 \sum_{J\in\cJ}\cE_J(x)
 \le \cE_{G^+}(x)
 =\mu.
\end{equation}

Call each $J\in\cJ$ a \emph{block}.
To keep track of the blocks for which the preceding comparison is
strict, define
$\cJ_{\mathrm{str}}:=\left\{J\in\cJ:C_J>K/\ell_J\right\}$,
and call the members of $\cJ_{\mathrm{str}}$ \emph{strict blocks}.
Set $\varepsilon_J=1$ if $J\in\cJ_{\mathrm{str}}$ and
$\varepsilon_J=0$ otherwise.

For fixed $d$, the positive differences
$C_J-K/\ell_J$ are uniformly bounded away from zero.
Indeed, if $\ell_J>1$, then every edge of $H_J$ either has an
endpoint in $\{a+1,\ldots,b-1\}$ or joins $a$ to $b$. Hence
\[
 |V(H_J)|=\ell_J+1\le d+2,
 \qquad
 |E(H_J)|
 \le d(\ell_J-1)+q_a
 \le d^2+d-2.
\]
Thus, for fixed $d$, there are only finitely many possibilities for
the multigraph $H_J$ together with its specified vertices $a,b$, up
to isomorphism. Consequently, there exists a constant
$\gamma=\gamma(d)\in(0,1]$ such that
$C_J-K/\ell_J\ge\gamma$
whenever $\ell_J>1$ and $C_J>K/\ell_J$.
For a strict singleton $J=\{a\}$, we have
$C_J-K=q_a-K\ge1$. Therefore, for every $J\in\cJ$,
\begin{equation}\label{eq:block-energy}
 \cE_J(x)\ge C_Jt_J^2,\qquad
 C_J\ge\frac K{\ell_J}+\gamma\varepsilon_J.
\end{equation}

Now $g_{i+1}-g_i=t_J/\ell_J$ for $i\in J$, and hence
\begin{equation}\label{eq:interpolated-path-energy}
 \cE_P(g)=\sum_{J\in\cJ}\frac{t_J^2}{\ell_J}.
\end{equation}
Summing the first inequality in~\eqref{eq:block-energy} over
$J\in\cJ$, and then using~\eqref{eq:sum-blocks} and
\eqref{eq:interpolated-path-energy}, we obtain
\begin{equation}
    \label{eq:coarse-energy}
 K\cE_P(g)
 +\gamma\sum_{J\in\cJ_{\mathrm{str}}}t_J^2
 \le\mu.
\end{equation}
\subsection{The norm correction}
For $z=(z_0,\ldots,z_n)$ with $z_0=0$, write
$\|z\|_2^2:=\sum_{i=1}^n z_i^2$.
Recall that $g_{a_j}=x_{a_j}$ and, for $J=[a,b)\in\cJ$ and $a<i<b$,
$x_a\le x_i,g_i\le x_b$. Thus $|x_i-g_i|\le t_J$, and
\eqref{eq:block-energy}, $\ell_J\le d+1$, and~\eqref{eq:sum-blocks} give
\begin{equation}\label{eq:coarse-norm}
 \|x-g\|_2^2
 \le\sum_{J\in\cJ}(\ell_J-1)t_J^2
 \le\frac{d(d+1)}K\sum_{J\in\cJ}\cE_J(x)
 \le\frac{d(d+1)}K\,\mu.
\end{equation}
For $\mu=O_d(n^{-2})$, this gives $\|g\|_2\ge1-O_d(n^{-1})$.
To compare the squared norms more precisely, put
$D_J:=\sum_{a<i<b}(x_i^2-g_i^2)$ for $J=[a,b)\in\cJ$.
Since $x_{a_j}=g_{a_j}$ for every $j$ and $\|x\|_2=1$,
\(
 \sum_{J\in\cJ}D_J=1-\|g\|_2^2.
\)
Together with~\eqref{eq:sum-blocks}, this gives
\begin{equation}\label{eq:sum-norm}
 \sum_{J\in\cJ}\bigl(\cE_J(x)-\mu D_J\bigr)
 \le\mu\|g\|_2^2.
\end{equation}
We therefore seek a lower bound on $\cE_J(x)-\mu D_J$.
We shall extract a term
\(
 -\mu c(x_b^2-x_a^2)
\)
from each block. Since the blocks form a partition of $[0,n)$ and
$x_0=0$, these terms telescope to $-\mu c x_n^2$. After moving this
term to the right-hand side, it becomes $\mu c g_n^2$, since
$g_n=x_n$.
\begin{proposition}\label{prop:refined}
There is a constant $C_d>0$, depending only on $d$, such that, if
$\mu\le1/4$,
\begin{equation}\label{eq:refined}
 K\cE_P(g)+\frac\gamma2\sum_{J\in\cJ_{\mathrm{str}}}t_J^2
 \le\mu\left(\sum_{i=1}^n g_i^2+c g_n^2\right)+C_d\mu^2.
\end{equation}
\end{proposition}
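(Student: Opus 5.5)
The plan is to prove a per-block inequality and then sum it over $\cJ$ using \eqref{eq:sum-norm}. For each $J=[a,b)\in\cJ$ I aim to show
\[
 \cE_J(x)-\mu D_J\ \ge\ \frac{K}{\ell_J}t_J^2+\frac{\gamma}{2}\varepsilon_J t_J^2-\mu c\,(x_b^2-x_a^2)-C'_d\bigl(\mu t_J^2+\mu^2x_b^2\bigr).
\]
Summing over $J$ has three effects. The terms $-\mu c(x_b^2-x_a^2)$ telescope to $-\mu c g_n^2$. The sum $\sum_J t_J^2$ is at most $\mu/(K/(d+1))$ by \eqref{eq:block-energy} and \eqref{eq:sum-blocks}. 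The sum $\sum_J x_{b}^2$ over right endpoints is at most $\|x\|_2^2=1$. Together with \eqref{eq:interpolated-path-energy} this gives \eqref{eq:refined}. Singletons are trivial, since $D_J=0$ and $x_b\ge x_a$. So the work is in blocks with $\ell_J\in\{d,d+1\}$.

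For such a block, let $w$ be the minimizer from Lemma~\ref{lem:prescribed-values} on $H_J$ with $w_a=x_a$ and $w_b=x_b$. Write $w_i=x_a+t_J\theta_i$ with $\theta_i\in[0,1]$; the $\theta_i$ depend only on the finitely many possible $H_J$. Since $w$ is harmonic at the interior vertices, the Pythagorean identity gives $\cE_J(x)=C_Jt_J^2+\cE_{H_J}(x-w)$. Also $x-w$ vanishes at $a$ and $b$, and $H_J$ is connected with at most $d+2$ vertices. Hence a Poincaré inequality gives $\sum_{a<i<b}(x_i-w_i)^2\le P_d\,\cE_{H_J}(x-w)$. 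I then split
\[
 D_J=\sum_{a<i<b}(x_i^2-w_i^2)+\sum_{a<i<b}(w_i^2-g_i^2).
\]
For the first sum, $|x_i^2-w_i^2|\le2x_b|x_i-w_i|$. Multiplying by $\mu$ and applying AM--GM bounds it by $\tfrac12\cE_{H_J}(x-w)+O_d(\mu^2x_b^2)$, and the half-energy is absorbed by the Pythagorean surplus. For the second sum, with $g_{a+s}=x_a+t_Js/\ell_J$,
\[
 \sum_{a<i<b}(w_i^2-g_i^2)=2x_at_J\sum_{a<i<b}\Bigl(\theta_i-\frac{s_i}{\ell_J}\Bigr)+O_d(t_J^2).
\]
Here $\sum_s s/\ell_J=(\ell_J-1)/2$. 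Since $c(x_b^2-x_a^2)=c(2x_at_J+t_J^2)$, it suffices to prove the key coordinate estimate
\[
 \Theta_J:=\sum_{a<i<b}\theta_i\ \le\ \frac{\ell_J-1}{2}+c .
\]
As a check, in the extremal block of Lemma~\ref{lem:block-energy} one has $\theta=1-w$ with $w_{i_*}=2/(d+1)$ and the other $w_i=1/(d+1)$. This gives $\Theta_J=d-1=\frac{d}{2}+\frac{d-2}{2}$, so equality holds there, which explains the choice $c=(d-2)/2$.

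The main obstacle is this estimate on $\Theta_J$ for all admissible $H_J$. Equivalently, in the normalization $w_a=1$, $w_b=0$ of Lemma~\ref{lem:block-energy}, it asks for a lower bound on $\sum_{a<i<b}w_i$. I would derive it from the optimality equations $dw_i=p_i+\sum_{j\sim i}w_j$ together with simplicity of $H_J-\{a,b\}$, as in the proof of Lemma~\ref{lem:block-energy}: summing and counting the edges to $b$ bounds $\sum w_i$ below in terms of $q_a$. This is the appendix coordinate estimate. For strict blocks, if the inequality for $\Theta_J$ fails or is not sharp, I would spend $\tfrac{\gamma}{2}t_J^2$ of the surplus $\gamma t_J^2$. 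That absorbs a deficit of size $\mu\cdot O_d(x_at_J)$ only when $\mu x_a\ll t_J$, and otherwise it is a $\mu^2x_a^2$ error via AM--GM. So strict blocks need only a crude bound $\Theta_J\le d$, while non-strict blocks are exactly the extremal configuration, where equality was verified above. The hypothesis $\mu\le1/4$ is used only to keep the AM--GM constants uniform.
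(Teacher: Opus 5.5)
Your argument is correct and is essentially the paper's proof. It uses the same ingredients: the harmonic minimizer $w$ on $H_J$; the orthogonal splitting $\cE_J(x)=C_Jt_J^2+\cE_{H_J}(x-w)$, whose surplus absorbs the $x-w$ part of $\mu D_J$; the exact identity $\Theta_J=(\ell_J-1)/2+c$ (the paper's $\alpha=c$) for non-strict blocks, which are forced to be the equality configuration of Lemma~\ref{lem:block-energy}; and Young's inequality against half of the surplus $\gamma t_J^2$ for strict blocks.

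One caution: drop the suggestion that $\Theta_J\le(\ell_J-1)/2+c$ might be derived for every admissible $H_J$ from the optimality equations or the appendix estimate. The appendix bounds $\max_i w_i$ from above, which is the wrong direction. The inequality also genuinely fails for some strict blocks. For example, take $d=3$, $\ell_J=4$, $q_a=1$, with edges $ai_*$, $i_*j$, $jk$, $i_*b$, $jb$ and a double edge $kb$. With $w_a=1$, $w_b=0$ one gets $(w_{i_*},w_j,w_k)=(8/21,1/7,1/21)$, so $\Theta_J=17/7>2$. Your final strict/non-strict dichotomy does not need this inequality.
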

\begin{proof}
We first prove, for every $J=[a,b)\in\cJ$, the estimate
\begin{equation}\label{eq:local-refined}
 \cE_J(x)-\mu D_J
 \ge \frac K{\ell_J}t_J^2+\frac\gamma2\varepsilon_Jt_J^2
        -\mu c(x_b^2-x_a^2)
 -B_d\mu t_J^2-B_d\mu^2(x_a^2+x_b^2),
\end{equation}
where $B_d>0$ depends only on $d$. We will show below that, after summing over $J\in\cJ$, the last two
terms contribute only $O_d(\mu^2)$.
For $\ell_J=1$, we have $D_J=0$ and
$\cE_J(x)=C_Jt_J^2\ge(K+\gamma\varepsilon_J)t_J^2$.
Since $x_b^2-x_a^2\ge0$, this implies~\eqref{eq:local-refined}
for any $B_d>0$.
Now fix $J=[a,b)\in\cJ$ with $\ell:=\ell_J>1$ and put
$A:=x_a$, $B:=x_b$, and $t:=B-A$, so $0\le A\le B$ and $t\ge0$.

Let $h$ attain the minimum on $H_J$ with $h_a=0$ and $h_b=1$.
By Lemma~\ref{lem:prescribed-values}, it is unique,
$\cE_{H_J}(h)=C_J$, and $0\le h_i\le1$.
For $a\le i\le b$, put $w_i:=A+t h_i$ and $u_i:=x_i-w_i$.
Then $w_a=A$, $w_b=B$, $u_a=u_b=0$, and $w$ attains the minimum
$C_Jt^2$ for these specified values.
For every $\tau\in\mathbb R$, $w+\tau u$ still has values $A,B$
at $a,b$, so $\cE_{H_J}(w+\tau u)\ge\cE_{H_J}(w)$.
Differentiating at $\tau=0$ gives
\[
 0=\frac12\left.\frac{d}{d\tau}\cE_{H_J}(w+\tau u)\right|_{\tau=0}
 =\sum_{ij\in E(H_J)}(w_i-w_j)(u_i-u_j).
\]

Since $x=w+u$ on $V(H_J)$, expansion in
\eqref{eq:block-graph-energy} and the orthogonality relation above yields
\begin{equation}\label{eq:orthogonal-energy}
 \cE_J(x)=C_Jt^2+\sum_{ij\in E(H_J)}(u_i-u_j)^2.
\end{equation}
The $\ell-1$ interior vertices induce a simple graph, and hence each
of them has at most $\ell-2$ neighbours among the interior vertices.
Since $d_{H_J}(i)=d$ for every $a<i<b$, at least
$d-\ell+2$ edges incident with $i$ join $i$ to $a$ or $b$.
Using $u_a=u_b=0$, we obtain
\begin{equation}\label{eq:block-error-energy}
 \sum_{ij\in E(H_J)}(u_i-u_j)^2
 \ge(d-\ell+2)\sum_{a<i<b}u_i^2
 \ge\sum_{a<i<b}u_i^2.
\end{equation}

Set $\alpha:=\sum_{a<i<b}(h_i-(i-a)/\ell)$.
Since $x_i=A+t h_i+u_i$ and $g_i=A+(i-a)t/\ell$, we have
\[
 D_J=2At\alpha
       +t^2\sum_{a<i<b}\left(h_i^2-\frac{(i-a)^2}{\ell^2}\right)
       +2\sum_{a<i<b}w_i u_i+\sum_{a<i<b}u_i^2.
\]
To compare $\mu D_J$ with $\mu c(B^2-A^2)$, the main term that
requires attention is
$2\mu At(\alpha-c)$. We will show that this term vanishes when
$\varepsilon_J=0$; when $\varepsilon_J=1$, we use half of
$\gamma t^2$ in~\eqref{eq:block-energy} to bound it.
\Needspace{9\baselineskip}

If $\varepsilon_J=0$, then $J\notin\cJ_{\mathrm{str}}$, and hence
$C_J\le K/\ell$ by the definition of $\cJ_{\mathrm{str}}$.
On the other hand,~\eqref{eq:block-energy} gives $C_J\ge K/\ell$.
Therefore, \(
 C_J= K/\ell
\).
As $\ell>1$, Lemma~\ref{lem:block-partition} gives
$\ell\in\{d,d+1\}$; the strict bound in Lemma~\ref{lem:block-energy}
excludes $\ell=d$. Hence $\ell=d+1$, and we choose $i_*$ from the
equality case of Lemma~\ref{lem:block-energy}.
The map $z\mapsto1-z$ preserves $\cE_{H_J}(z)$ and interchanges
the specified values $1,0$ with $0,1$. By the uniqueness in
Lemma~\ref{lem:prescribed-values}, the equality case computation in the proof of
Lemma~\ref{lem:block-energy} therefore gives
$h_{i_*}=(d-1)/(d+1)$ and $h_i=d/(d+1)$ for
$a<i<b$, $i\ne i_*$.
Thus
\[
 \alpha=\frac{d-1}{d+1}+(d-1)\frac d{d+1}
            -\sum_{s=1}^d\frac{s}{d+1}
         =d-1-\frac d2=c.
\]
If $\varepsilon_J=1$, the bounds $0\le h_i,(i-a)/\ell\le1$ give
$|\alpha|\le\ell-1\le d$. Thus
$|\alpha-c|\le(d+c)\varepsilon_J$ in both cases.
Also, $\ell-1\le d$ and $A\le w_i\le B$ gives $\sum_{a<i<b}w_i^2\le dB^2$.
Substituting
\(
 B^2-A^2=2At+t^2
\)
into the preceding expression for $D_J$, and using
$\sum_{a<i<b}\left(h_i^2-(i-a)^2/\ell^2\right)-c\le d+c$,
we obtain
\begin{equation}\label{eq:norm-splitting}
 D_J\le c(B^2-A^2)+2At(\alpha-c)+(d+c)t^2
 +2\sum_{a<i<b}w_i u_i+\sum_{a<i<b}u_i^2.
\end{equation}
Applying Young's inequality $2\xi\zeta\le\eta\xi^2+\eta^{-1}\zeta^2$ for $\eta>0$
gives
\begin{align*}
 2\mu|At(\alpha-c)|
 &\le\frac\gamma2\varepsilon_Jt^2
       +\frac{2(d+c)^2}{\gamma}\mu^2A^2,\\
 \mu\left(2\sum_{a<i<b}w_i u_i+\sum_{a<i<b}u_i^2\right)
 &\le\left(\frac14+\mu\right)\sum_{a<i<b}u_i^2
       +4\mu^2\sum_{a<i<b}w_i^2\\
 &\le\frac12\sum_{a<i<b}u_i^2+4d\mu^2B^2,
\end{align*}
where in the last inequality we used $\mu\le1/4$.

Take $B_d:=\max\{d+c,2(d+c)^2/\gamma,4d\}$.
Substitution in~\eqref{eq:norm-splitting} gives
\[
 \mu D_J\le \mu c(B^2-A^2)+\frac\gamma2\varepsilon_Jt^2
              +\frac12\sum_{a<i<b}u_i^2
 +B_d\mu t^2+B_d\mu^2(A^2+B^2).
\]
On the other hand,~\eqref{eq:block-energy},
\eqref{eq:orthogonal-energy}, and~\eqref{eq:block-error-energy} give
$\cE_J(x)\ge(K/\ell+\gamma\varepsilon_J)t^2+\sum_{a<i<b}u_i^2$.
Subtracting the bound on $\mu D_J$ and discarding
$(1/2)\sum_{a<i<b}u_i^2\ge0$ proves~\eqref{eq:local-refined}.
The argument remains valid when $A=0$ or $t=0$, since no division by
either quantity was used.
\Needspace{8\baselineskip}

We now sum~\eqref{eq:local-refined}. Since $x_0=0$,
\begin{equation}\label{eq:sum-endpoint-corrections}
 \sum_{J=[a,b)\in\cJ}(x_b^2-x_a^2)
 =\sum_{j=1}^r(x_{a_j}^2-x_{a_{j-1}}^2)=x_n^2.
\end{equation}
Also, $\|x\|_2=1$, $\ell_J\le d+1$, \eqref{eq:sum-blocks}, and~\eqref{eq:block-energy}
imply
\[
 \sum_{J=[a,b)\in\cJ}(x_a^2+x_b^2)
 =2\sum_{j=1}^{r-1}x_{a_j}^2+x_n^2\le2,
 \qquad \sum_{J\in\cJ}t_J^2
 \le\frac{d+1}K\sum_{J\in\cJ}\cE_J(x)
 \le\frac{d+1}K\,\mu.
\]
The remainders in~\eqref{eq:local-refined} therefore sum to at most
\[
 B_d\mu\sum_{J\in\cJ}t_J^2
 +B_d\mu^2\sum_{J=[a,b)\in\cJ}(x_a^2+x_b^2)
 \le B_d\left(\frac{d+1}K+2\right)\mu^2.
\]
Set $C_d:=B_d((d+1)/K+2)$. Combining~\eqref{eq:interpolated-path-energy},
\eqref{eq:sum-norm}, \eqref{eq:sum-endpoint-corrections}, and $g_n=x_n$,
we obtain
\[
 K\cE_P(g)+\frac\gamma2\sum_{J\in\cJ_{\mathrm{str}}}t_J^2
 \le\sum_{J\in\cJ}\bigl(\cE_J(x)-\mu D_J\bigr)
       +\mu c x_n^2+C_d\mu^2
 \le\mu\bigl(\|g\|_2^2+c g_n^2\bigr)+C_d\mu^2.\qedhere
\]
\end{proof}

Combining Proposition~\ref{prop:refined} with~\eqref{eq:coarse-norm},
we obtain the following path comparison. For extremal graphs, the
hypothesis $\mu=O_d(n^{-2})$ will follow from
Lemma~\ref{lem:construction}.
\begin{corollary}[Path comparison]\label{cor:weighted-norm}
For every fixed $C_0>0$, there exist constants
$n_0=n_0(d,C_0)$ and $C=C(d,C_0)>0$ such that, for every
$G\in\mathcal G(n,d)$ with $n\ge n_0$ and $\mu\le C_0n^{-2}$,
the sequence $g$ constructed above satisfies
\[
 \mu\ge
 \frac{K\cE_P(g)+(\gamma/2)\sum_{J\in\cJ_{\mathrm{str}}}t_J^2}
      {\sum_{i=1}^n g_i^2+c g_n^2}
 -\frac{C}{n^4}.
\]
In particular,~\eqref{eq:weighted-comparison} holds whenever
$\mu=O_d(n^{-2})$.
\end{corollary}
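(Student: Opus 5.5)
The corollary follows from Proposition~\ref{prop:refined} by dividing by the weighted norm of $g$ and controlling the error term. Write $N:=\sum_{i=1}^n g_i^2+c g_n^2$ for the denominator and $E:=K\cE_P(g)+(\gamma/2)\sum_{J\in\cJ_{\mathrm{str}}}t_J^2$ for the numerator.

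First we check that the proposition applies. Since $\mu\le C_0n^{-2}$, we have $\mu\le1/4$ once $n\ge n_0$, where we require at least $n_0^2\ge4C_0$. The proposition then gives
\[
 E\le\mu N+C_d\mu^2\le\mu N+C_dC_0^2n^{-4}.
\]

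Next we bound $N$ from below so that we can divide. By~\eqref{eq:coarse-norm}, $\|x-g\|_2^2\le d(d+1)\mu/K\le d(d+1)C_0/(Kn^2)$. The triangle inequality and $\|x\|_2=1$ then give
\[
 \|g\|_2\ge1-\sqrt{d(d+1)C_0/K}\;n^{-1}.
\]
Enlarging $n_0$ if necessary, this yields $\|g\|_2^2\ge1/2$. Since $c\ge0$, we get $N\ge\|g\|_2^2\ge1/2$.

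Dividing the inequality for $E$ by $N$ gives
\[
 \mu\ge E/N-C_dC_0^2n^{-4}/N\ge E/N-2C_dC_0^2n^{-4}.
\]
So the corollary holds with $C:=2C_dC_0^2$, and $n_0$ depends only on $d$ and $C_0$ because $C_d$ and $\gamma$ depend only on $d$.

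For the final sentence of the corollary, $\mu=O_d(n^{-2})$ means there is some fixed $C_0=C_0(d)$ with $\mu\le C_0n^{-2}$. Applying the statement just proved with this $C_0$ gives~\eqref{eq:weighted-comparison} with an $O_d(n^{-4})$ error.

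There is no genuine obstacle here. The only point needing care is the division step: $N$ must be bounded below uniformly so that the $\mu^2$ error keeps order $n^{-4}$. The norm comparison~\eqref{eq:coarse-norm} provides exactly this bound.
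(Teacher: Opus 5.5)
Your proposal is correct and follows essentially the same route as the paper's proof. You apply Proposition~\ref{prop:refined} once $\mu\le 1/4$, use~\eqref{eq:coarse-norm} and the triangle inequality to bound the denominator below by $1/2$, and then divide to get the error $2C_dC_0^2n^{-4}$; the paper does the same, with the same constant $C=2C_dC_0^2$.
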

\begin{proof}
For $\mu\le C_0n^{-2}$, equation~\eqref{eq:coarse-norm} gives
$\|x-g\|_2\le\sqrt{d(d+1)C_0/K}\,n^{-1}$.
By increasing $n_0=n_0(d,C_0)$ if necessary, we may assume that $\|x-g\|_2\le1/4$ and $\mu\le1/4$
whenever $n\ge n_0$. Since $c>0$ and $\|x\|_2=1$,
\[
 \sum_{i=1}^n g_i^2+c g_n^2
 \ge\|g\|_2^2\ge(1-\|x-g\|_2)^2
 \ge\frac9{16}>\frac12.
\]
Since $\mu\le1/4$, Proposition~\ref{prop:refined} applies.
Dividing~\eqref{eq:refined} by $\sum_{i=1}^n g_i^2+c g_n^2$
gives an error of at most $2C_d\mu^2\le2C_dC_0^2n^{-4}$.
Take $C:=2C_dC_0^2$; if $C_0$ depends only on $d$, this error is
$O_d(n^{-4})$.
\end{proof}
\section{Lower bounds on \texorpdfstring{$d-\lambda_1(G)$}{d - lambda1(G)}}\label{sec:path}
In this section, we use Corollary~\ref{cor:weighted-norm} to derive a
general lower bound on $d-\lambda_1(G)$, together with a strengthened
bound when a strict block begins at index $0$ or $1$. Recall that $d\ge3$ is fixed and odd,
$K=d-1$, and $c=(d-2)/2$; put $\kappa:=K\pi^2/4$.
For $G\in\mathcal G(n,d)$, retain $\mu=d-\lambda_1(G)$ and the sequence
$g$, partition $\cJ$, set $\cJ_{\mathrm{str}}$, and constant $\gamma$
from Section~\ref{sec:cuts}. For $u=(u_0,\ldots,u_n)$ with $u_0=0$,
write $\|u\|_c^2:=\sum_{i=1}^n u_i^2+c u_n^2$.

Fix $C_0>0$. If $\mu\le C_0n^{-2}$, deleting the nonnegative sum over
$\cJ_{\mathrm{str}}$ in Corollary~\ref{cor:weighted-norm} gives
\begin{equation}\label{eq:baseline-from-refined}
 \mu\ge\frac{K\cE_P(g)}{\|g\|_c^2}-\frac{C_1}{n^4},
 \qquad C_1=C_1(d,C_0)>0,
\end{equation}
for all sufficiently large $n$.
Since $g_0=0$, the edge $\{0,1\}$ of $P$ contributes
$K(g_1-g_0)^2=K g_1^2$ to $K\cE_P(g)$.
The following variational formula will be applied to
$(g_1,\ldots,g_n)$ and will also be used in the construction in
Subsection~\ref{sec:construction}.
\begin{lemma}\label{lem:path-minimum}
For fixed $a,k,b>0$, as the integer $q\to\infty$,
\begin{equation}\label{eq:path-endpoint-minimum}
 \min_{z\in\mathbb R^{q+1}\setminus\{0\}}
 \frac{a z_0^2+k\sum_{i=0}^{q-1}(z_{i+1}-z_i)^2}
      {\sum_{i=0}^{q-1}z_i^2+b z_q^2}
 =\frac{k\pi^2}{4(q+b-1/2+k/a)^2}+O_{a,k,b}(q^{-4}).
\end{equation}
Both $z_0$ and $z_q$ are free in this minimum.
\end{lemma}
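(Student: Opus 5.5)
The minimum in \eqref{eq:path-endpoint-minimum} is the smallest eigenvalue $\lambda$ of a generalized eigenproblem. I will compute it by solving the associated second-order difference equation exactly with a trigonometric ansatz, and then expand the resulting secular equation for large $q$. The numerator is $z^{\trans}Tz$ for the symmetric tridiagonal matrix $T$ with
\[
 T_{00}=a+k,\qquad T_{ii}=2k\ (0<i<q),\qquad T_{qq}=k,\qquad T_{i,i+1}=-k.
\]
The denominator is $z^{\trans}Mz$ with $M=\mathrm{diag}(1,\ldots,1,b)$. So the minimum is the least eigenvalue of $M^{-1/2}TM^{-1/2}$. This matrix is irreducible tridiagonal with negative off-diagonal entries, so by Perron--Frobenius (applied to a shift of its negative) its least eigenvalue is simple. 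Moreover, it is the only eigenvalue admitting an eigenvector with all entries positive. Hence it suffices to exhibit a generalized eigenpair $(\lambda,z)$ with $z>0$ and to estimate that $\lambda$.

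The stationarity equations are
\[
 k(2z_i-z_{i-1}-z_{i+1})=\lambda z_i\ \ (0<i<q),\qquad
 (a+k)z_0-kz_1=\lambda z_0,\qquad
 k(z_q-z_{q-1})=\lambda b z_q .
\]
I write $\lambda=2k(1-\cos\theta)=4k\sin^2(\theta/2)$ with $\theta\in(0,\pi/2)$ and take $z_i=\sin(\theta(i+s))$, which solves the interior recurrence for every $s$. Extending the recurrence to a fictitious $z_{-1}$ turns the left boundary condition into $a\sin(\theta s)=k\bigl(\sin(\theta s)-\sin(\theta(s-1))\bigr)$. This determines $s=s(\theta)$, with $s=k/a+O(\theta^2)$ uniformly for small $\theta$; one checks this by rewriting the condition as $\tan(\theta s)=k\sin\theta/(a-k+k\cos\theta)$ and expanding. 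The right boundary condition becomes
\[
 2k\sin(\theta/2)\cos\bigl(\theta(q-\tfrac12+s)\bigr)=4bk\sin^2(\theta/2)\sin\bigl(\theta(q+s)\bigr).
\]
Writing $\phi:=\theta(q-\tfrac12+s)$, this reads $\cos\phi=2b\sin(\theta/2)\sin(\phi+\theta/2)$. Near $\phi=\pi/2$ this gives $\tfrac{\pi}{2}-\phi=b\theta+O(\theta^3)$, so
\[
 \theta\bigl(q+b-\tfrac12+s(\theta)\bigr)=\tfrac{\pi}{2}+O(\theta^3).
\]

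The root of interest is the smallest positive $\theta$ solving this system. I will locate it by an intermediate value argument applied to $F(\theta):=\cos\phi-2b\sin(\theta/2)\sin(\phi+\theta/2)$ on an interval around $\theta_0:=\pi/\bigl(2(q+b-\tfrac12+k/a)\bigr)$ of width $Cq^{-3}$. The function $F$ changes sign there, and its derivative is of order $q$, which gives a unique root with $\theta=\theta_0+O(q^{-3})$. For this root, $\theta(i+s)\in(0,\pi/2+O(q^{-1}))$ for $0\le i\le q$; in fact the arguments stay in $(0,\pi)$, since $s\approx k/a>0$ and $\phi<\pi/2$. Hence $z>0$, and by the Perron uniqueness noted above this $\lambda$ is the minimum. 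Finally,
\[
 \lambda=k\theta^2+O(\theta^4)=k\theta_0^2+O(q^{-4})=\frac{k\pi^2}{4(q+b-1/2+k/a)^2}+O_{a,k,b}(q^{-4}).
\]

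The main obstacle is the error bookkeeping. I need $\theta$ to accuracy $O(q^{-3})$, not merely $O(q^{-2})$, which means the $O(\theta^2)$ correction in $s(\theta)$ and the cubic terms at the right boundary must be tracked carefully. For this I will Taylor expand both boundary equations to second order explicitly, with all constants depending only on $a,k,b$. The positivity check, which is what certifies that we have the least eigenvalue rather than some other root, is routine once the root has been localized.
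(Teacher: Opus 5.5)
Your proposal is correct. It reaches the same trigonometric boundary equations as the paper, but the logic runs in the opposite direction.

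\textbf{The paper's route.} The paper starts from an actual minimizer, whose existence comes from compactness. It shows the minimizer may be taken nonnegative, and then strictly positive by propagating zeros through the Euler--Lagrange equations. The test vector $z_i=i$ gives $\Lambda=O(q^{-2})$, so one may write $\Lambda=2k(1-\cos\theta)$ with $\theta$ small. The minimizer then has the form $A\sin(i\theta+\beta)$, and positivity gives $0<q\theta+\beta<\pi$. The two boundary equations pin down $\theta$. In this route the relevant root exists automatically, and no uniqueness theorem is needed.

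\textbf{Your route.} You construct a candidate eigenpair and produce the root by an intermediate value argument. You then read off positivity from the explicit formula. Minimality is certified by Perron--Frobenius for the irreducible tridiagonal matrix $M^{-1/2}TM^{-1/2}$: eigenvectors for distinct eigenvalues are orthogonal, and two positive vectors cannot be orthogonal. This frees you from any a priori bound on the minimum and from the positivity-of-minimizer argument. The cost is the sign-change bookkeeping for $F$.

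\textbf{Agreement of the formulas.} Your $\theta s$ is the paper's $\beta$. Your relation $\cos\phi=2b\sin(\theta/2)\sin(\phi+\theta/2)$ is equivalent to the paper's $\cot(q\theta+\beta)=(2b-1)\tan(\theta/2)$.

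\textbf{Two small remarks.}
\begin{itemize}
\item The derivative bound on $F$ and the phrase ``smallest positive $\theta$'' are unnecessary. You only need a sign change: since $F(\theta_0)=O(q^{-3})$, one has $F(\theta_0\pm Cq^{-3})=\mp Cq^{-2}+O(q^{-3})$. Uniqueness is supplied by Perron--Frobenius, not by monotonicity of $F$.
\item Localizing $\theta$ to within $O(q^{-3})$ does suffice, because $\theta_0\asymp q^{-1}$. In fact your own expansion $\theta(q+b-\tfrac12+k/a)=\tfrac{\pi}{2}+O(\theta^3)$ already gives $\theta=\theta_0+O(q^{-4})$, as in the paper.
\end{itemize}
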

\begin{proof}
Denote the minimum by $\Lambda$ and normalize
$\sum_{i=0}^{q-1}z_i^2+bz_q^2=1$.
This constraint gives $|z_i|\le1$ for $i<q$ and $|z_q|\le b^{-1/2}$.
The normalized constraint set is compact, and hence the continuous
numerator attains its minimum.
Replacing each $z_i$ by $|z_i|$ preserves the denominator and does not
increase $(z_{i+1}-z_i)^2$, so choose a minimizer with $z_i\ge0$.
The Lagrange multiplier equations at a minimizer give
\begin{equation}\label{eq:path-euler}
 \begin{aligned}
  a z_0+k(z_0-z_1)&=\Lambda z_0,\\
  k(2z_i-z_{i-1}-z_{i+1})&=\Lambda z_i
      &&(1\le i<q),\\
  k(z_q-z_{q-1})&=b\Lambda z_q.
 \end{aligned}
\end{equation}
If $z_i=0$ for $0<i<q$, then $z_{i-1}+z_{i+1}=0$ and hence
$z_{i-1}=z_{i+1}=0$.
The equations at $0,q$ likewise give $z_0=0\Rightarrow z_1=0$ and
$z_q=0\Rightarrow z_{q-1}=0$.
By connectivity along the path, any zero coordinate would therefore
force all coordinates to vanish, contradicting the normalization.
Hence $z_i>0$
for every $i$.
The numerator vanishes only at $z=0$, and hence $\Lambda>0$.
On the other hand, testing the quotient with $z_i=i$ gives
\[
 0<\Lambda\le\frac{kq}{\sum_{i=0}^{q-1}i^2+bq^2}
 =O_{a,k,b}(q^{-2}).
\]
Since $\Lambda=O_{a,k,b}(q^{-2})$, for all sufficiently large $q$
there is a unique $\theta\in(0,\pi)$ such that
\(
 \Lambda=2k(1-\cos\theta).
\)
Moreover,
\(
 \theta=O_{a,k,b}(q^{-1}).
\)
The equations for $1\le i<q$ become
$z_{i+1}-2\cos\theta\,z_i+z_{i-1}=0$.
Thus $z_i=A\sin(i\theta+\beta)$ for $0\le i\le q$, with suitable $A>0$ and $\beta\in(0,\pi)$, where the latter choice is
possible since $z_0>0$.
The first equation of~\eqref{eq:path-euler} gives
\[
 \tan\beta
 =\frac{\sin\theta}{a/k-(1-\cos\theta)}
 =\frac{k}{a}\theta+O_{a,k}(\theta^3).
\]
For small $\theta$, the denominator is positive, so
$0<\beta<\pi/2$ and $\beta=(k/a)\theta+O_{a,k}(\theta^3)$.

We also have $q\theta+\beta<\pi$.
Otherwise, let $j\in\{1,\ldots,q\}$ be the smallest index such that
$j\theta+\beta\ge\pi$. Then
\(
 \pi\le j\theta+\beta<\pi+\theta<2\pi
\),
and hence $z_j\le0$, a contradiction.
Consequently $0<q\theta+\beta<\pi$, and the last equation
of~\eqref{eq:path-euler} yields
\[
 \begin{aligned}
  \cot(q\theta+\beta)&=(2b-1)\tan(\theta/2),\\
  q\theta+\beta
  &=\frac\pi2-\arctan\!\bigl((2b-1)\tan(\theta/2)\bigr)
   =\frac\pi2-\left(b-\frac12\right)\theta+O_b(\theta^3).
 \end{aligned}
\]
Combining this with
\(
 \beta=k\theta/a+O_{a,k}(\theta^3),
\)
we obtain
$\left(q+b-1/2+k/a\right)\theta=\pi/2+O_{a,k,b}(\theta^3)$.
Since $\theta=O_{a,k,b}(q^{-1})$ and
$q+b-1/2+k/a=q+O_{a,k,b}(1)$,
it follows that
$\theta={\pi/[2(q+b-1/2+k/a)]}+O_{a,k,b}(q^{-4})$.
Substitution in $\Lambda=k\theta^2+O_k(\theta^4)$ proves
\eqref{eq:path-endpoint-minimum}.
\end{proof}

We now apply~\eqref{eq:path-endpoint-minimum} to the quotient
in~\eqref{eq:baseline-from-refined}.
\begin{corollary}[The general lower bound]\label{cor:path-baseline}
For every fixed $C_0>0$, there exist constants
$n_0=n_0(d,C_0)$ and $C=C(d,C_0)>0$ 
such that every $G\in\mathcal G(n,d)$ with $n\ge n_0$ and
$\mu\le C_0n^{-2}$ satisfies
\[
 \mu\ge\frac{\kappa}{(n+K/2)^2}-\frac{C}{n^4}.
\]
\end{corollary}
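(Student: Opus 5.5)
We start from \eqref{eq:baseline-from-refined}, which for $n\ge n_0(d,C_0)$ gives
\[
 \mu\ge\frac{K\cE_P(g)}{\|g\|_c^2}-\frac{C_1}{n^4}.
\]
It therefore suffices to bound the Rayleigh-type quotient $K\cE_P(g)/\|g\|_c^2$ from below by $\kappa/(n+K/2)^2-O_d(n^{-4})$. We do this by discarding all structure of $g$ except $g_0=0$ and using Lemma~\ref{lem:path-minimum}.

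Set $z_i:=g_{i+1}$ for $0\le i\le q$, with $q:=n-1$. Then $z_q=g_n$, and the denominator becomes
\[
 \|g\|_c^2=\sum_{i=0}^{q-1}z_i^2+(1+c)z_q^2 .
\]
Because $g_0=0$, the numerator splits as
\[
 K\cE_P(g)=Kz_0^2+K\sum_{i=0}^{q-1}(z_{i+1}-z_i)^2 .
\]
This is exactly the quotient in \eqref{eq:path-endpoint-minimum} with $a=k=K$ and $b=1+c=d/2$. All three are positive constants depending only on $d$. Note that $z\neq0$, since $\|g\|_2\ge 1-O_d(n^{-1})>0$ by \eqref{eq:coarse-norm}. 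The quotient of $g$ is at least the minimum over all $z$, so Lemma~\ref{lem:path-minimum} gives
\[
 \frac{K\cE_P(g)}{\|g\|_c^2}\ge\frac{K\pi^2}{4\,(q+b-1/2+k/a)^2}-O_d(q^{-4}).
\]

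Next we compute the shift:
\[
 q+b-\tfrac12+\tfrac ka=(n-1)+\tfrac d2-\tfrac12+1=n+\tfrac{d-1}{2}=n+\tfrac K2 .
\]
So the main term is exactly $\kappa/(n+K/2)^2$. Since $q=n-1$, we have $O_d(q^{-4})=O_d(n^{-4})$. Combining with \eqref{eq:baseline-from-refined}, we take $C:=C_1+C'$, where $C'$ is the implicit constant from Lemma~\ref{lem:path-minimum}, and enlarge $n_0$ so that the asymptotic regime of that lemma applies.

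There is no real obstacle here. The only points needing care are the index shift that maps $g_1,\dots,g_n$ to $z_0,\dots,z_q$, so that the weight $1+c$ lands on $z_q$ and the edge $\{0,1\}$ becomes the boundary term $az_0^2$, and checking that the constants $a,k,b$ depend only on $d$, so that the $O$-term is uniform in $G$.
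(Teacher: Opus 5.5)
Your proposal is correct and matches the paper's proof: the same reindexing $(g_1,\ldots,g_n)\mapsto(z_0,\ldots,z_q)$ with $q=n-1$, $a=k=K$, $b=1+c$, the same shift computation giving $n+K/2$, and the same substitution into \eqref{eq:baseline-from-refined}. The only difference is cosmetic: you show $(g_1,\ldots,g_n)\neq0$ via \eqref{eq:coarse-norm}, whereas the paper uses $g_n=x_n>0$, which is simpler and needs no largeness of $n$.
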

\begin{proof}
The condition $u_0=0$ gives
\[
 \begin{aligned}
  \min_{\substack{u\in\mathbb R^{n+1}\setminus\{0\}\\u_0=0}}
   \frac{K\sum_{i=0}^{n-1}(u_{i+1}-u_i)^2}{\|u\|_c^2}
  &=\min_{(u_1,\ldots,u_n)\ne0}
   \frac{K u_1^2+K\sum_{i=1}^{n-1}(u_{i+1}-u_i)^2}
        {\sum_{i=1}^{n-1}u_i^2+(1+c)u_n^2}\\
  &=\frac{\kappa}{(n+K/2)^2}+O_d(n^{-4}),
 \end{aligned}
\]
where the last equality is~\eqref{eq:path-endpoint-minimum} with
$q=n-1$, $a=k=K$, and $b=1+c$, using $c+1/2=K/2$.
Since $g_0=0$ and $g_n=x_n>0$, the vector
$(g_1,\ldots,g_n)$ is nonzero. Hence the corresponding quotient is at
least the above minimum. Substituting this estimate into~\eqref{eq:baseline-from-refined}
proves the result, after adjusting the constant $C=C(d,C_0)$.
\end{proof}

For $J=[s,s+\ell)\in\cJ_{\mathrm{str}}$, the identity
$g_{i+1}-g_i=t_J/\ell$ for $i\in J$ gives
$(\gamma/2)t_J^2=(\gamma\ell/2)\sum_{i=s}^{s+\ell-1}(g_{i+1}-g_i)^2$.
Thus, retaining this term amounts to replacing the coefficient $K$
of each of these $\ell$ squared differences by
$K+\gamma\ell/2$.
The next lemma evaluates the resulting improvement for $s\in\{0,1\}$,
the locations supplied by Lemma~\ref{lem:initial-strict} in the
degree-sequence proof.
\begin{lemma}[The improvement from a strict block]\label{lem:path-strict}
Put $r_*:=\gamma/(2K+\gamma)>0$.
For every fixed $C_0>0$, there are $n_0=n_0(d,C_0)$ and $C=C(d,C_0)>0$
such that the following holds. If $G\in\mathcal G(n,d)$,
$n\ge n_0$, $\mu\le C_0n^{-2}$, and the partition constructed in
Section~\ref{sec:cuts} contains a strict block $J=[s,s+\ell)$ with
$s\in\{0,1\}$, then
\[
 \mu\ge\frac{\kappa}{(n+K/2-r_*)^2}-\frac{C}{n^4}.
\]
\end{lemma}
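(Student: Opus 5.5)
The plan is to start from Corollary~\ref{cor:weighted-norm}, but this time keep the strict-block term instead of discarding it as in~\eqref{eq:baseline-from-refined}. The extra weight sits on the edges of $P$ at the start of the path. Collapsing those edges into a single, stronger first edge should turn the quotient into the form of Lemma~\ref{lem:path-minimum}, with a larger endpoint coefficient $a$. That in turn shortens the effective length $q+b-1/2+k/a$ by at least $r_*$.

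\emph{Step 1 (the numerator).} Let $J=[s,s+\ell)$ be the strict block with $s\in\{0,1\}$, and put $m:=s+\ell$ and $K':=K+\gamma\ell/2$. By the remark before the lemma, the numerator in Corollary~\ref{cor:weighted-norm} is at least
\[
 K\sum_{i=0}^{s-1}(g_{i+1}-g_i)^2+K'\sum_{i=s}^{m-1}(g_{i+1}-g_i)^2+K\sum_{i=m}^{n-1}(g_{i+1}-g_i)^2 .
\]
Since $g_0=0$, the Cauchy--Schwarz (series-conductance) inequality bounds the first two sums from below by $a\,g_m^2$, where $a:=(s/K+\ell/K')^{-1}$. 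This covers both $s=0$ and $s=1$; for $s=1$ the edge $\{0,1\}$ has coefficient $K$. A strict singleton is allowed here: $\ell=1$ and $C_J-K\ge1\ge\gamma$, so it only helps. Since $\ell\le d+1$ and $s\le1$, only finitely many values of $(s,\ell)$ occur, so all constants are uniform in $d$.

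\emph{Step 2 (the denominator).} We must discard $\sum_{i=1}^{m-1}g_i^2$ from $\|g\|_c^2$. Because $g$ is nondecreasing, $\sum_{i=1}^{m-1}g_i^2\le (d+1)g_m^2$. From the quotient itself we have $a g_m^2\le(\mu+Cn^{-4})\|g\|_c^2=O_d(n^{-2})\|g\|_c^2$. Hence
\[
 \|g\|_c^2\le\bigl(1+O_d(n^{-2})\bigr)\Bigl(\sum_{i=m}^{n-1}g_i^2+(1+c)g_n^2\Bigr).
\]
This changes the $\Theta(n^{-2})$ quotient only by $O_d(n^{-4})$. I expect this bookkeeping, making the perturbation argument rigorous while keeping the error at order $n^{-4}$, to be the main technical point, but it appears routine.

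\emph{Step 3 (apply the path lemma).} Set $z_i:=g_{m+i}$ for $0\le i\le q:=n-m$. Then Lemma~\ref{lem:path-minimum} with $k=K$, $b=1+c$ and the above $a$ gives
\[
 \mu\ge\frac{\kappa}{(n-m+K/2+K/a)^2}-\frac{C}{n^4}.
\]
Here we used $c+1/2=K/2$. The remaining step is arithmetic:
\[
 -m+K/a=-\ell+K\ell/K'=-\gamma\ell^2/(2K+\gamma\ell),
\]
and the $s$ terms cancel. The function $\ell\mapsto\gamma\ell^2/(2K+\gamma\ell)$ is increasing for $\ell\ge1$, so the effective length is at most $n+K/2-\gamma/(2K+\gamma)=n+K/2-r_*$. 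This gives the claimed bound after adjusting $C=C(d,C_0)$, with $n_0$ taken large enough for Corollary~\ref{cor:weighted-norm} and Lemma~\ref{lem:path-minimum} to apply.
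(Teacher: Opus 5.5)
Your proposal is correct. It reaches the same shift $r_\ell=\gamma\ell^2/(2K+\gamma\ell)\ge r_*$ as the paper, but by a different route.

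\textbf{The paper's route.} After retaining the strict-block term exactly as you do, the paper studies the weighted minimum $\Lambda$ over all $u$ with $u_0=0$ directly. It re-runs the argument of Lemma~\ref{lem:path-minimum} with modified coefficients. It uses the summed Euler--Lagrange equations to estimate $u_L$ and $u_{L+1}$ up to $O_d(\theta^2)$, and then matches the phase of the sine solution on $[L,n]$.

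\textbf{Your route.} You instead use Lemma~\ref{lem:path-minimum} as a black box.
\begin{itemize}
\item The series-conductance (Cauchy--Schwarz) inequality collapses the first $s+\ell$ weighted edges into the boundary term $a g_m^2$, with $K/a=s+K\ell/K'$. Hence $-m+K/a=-r_\ell$ exactly.
\item The truncation of the denominator is controlled a priori by $a g_m^2\le(\mu+Cn^{-4})\|g\|_c^2$ and the monotonicity of $g$.
\item This costs only a factor $1+O_{d,C_0}(n^{-2})$ on a quotient of size $O_{d,C_0}(n^{-2})$, so the error stays at $O_{d,C_0}(n^{-4})$.
\end{itemize}

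\textbf{Comparison.} Your argument is shorter and avoids a second shooting analysis. It yields only a one-sided bound, and it applies to the specific vector $g$ rather than to the auxiliary minimum $\Lambda$. A lower bound is all the lemma needs, however. The paper's computation instead gives two-sided asymptotics for the weighted eigenvalue $\Lambda$, showing that $r_\ell$ is the exact shift for that auxiliary problem.

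\textbf{Minor points.}
\begin{itemize}
\item Your constants depend only on $d$ and $C_0$; they are not ``uniform in $d$''.
\item The remark about strict singletons is unnecessary. Corollary~\ref{cor:weighted-norm} already includes their $(\gamma/2)t_J^2$ term, so the case $\ell=1$ is covered by the same formula.
\end{itemize}
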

\begin{proof}
Fix a strict block $J=[s,s+\ell)$ as in the statement; recall that
$\ell\in\{1,d,d+1\}$. For $0\le i<n$, set
\[
 a_i:=
 \begin{cases}
 K+\gamma\ell/2,&s\le i<s+\ell,\\
 K,&\text{otherwise},
 \end{cases}
 \qquad
 \Lambda:=\min_{\substack{u\in\mathbb R^{n+1}\setminus\{0\}\\u_0=0}}
 \frac{\sum_{i=0}^{n-1}a_i(u_{i+1}-u_i)^2}{\|u\|_c^2}.
\]
Retaining only the term corresponding to $J$ in
Corollary~\ref{cor:weighted-norm}, and using the fact that $g$ is
affine on $J$, we obtain
\begin{equation}\label{eq:strict-from-refined}
 \mu\ge
 \frac{\sum_{i=0}^{n-1}a_i(g_{i+1}-g_i)^2}{\|g\|_c^2}
 -\frac{C_1}{n^4}
 \ge\Lambda-\frac{C_1}{n^4},
 \qquad C_1=C_1(d,C_0)>0.
\end{equation}
Here $g_0=0$ and $g_n=x_n>0$, so $g$ is admissible in the
minimum defining $\Lambda$. It remains to estimate $\Lambda$ to
within $O_d(n^{-4})$.

Put $L:=s+\ell\le d+2$ and assume that $n>L$.
As in Lemma~\ref{lem:path-minimum}, the minimum is attained after
normalizing $\|u\|_c=1$. Replacing the coordinates by their absolute
values preserves the denominator and does not increase the numerator.
We may therefore choose a minimizer with $u_i\ge0$.
The first-order optimality conditions, together with $a_{n-1}=K$,
give
\begin{equation}\label{eq:strict-euler}
 \begin{aligned}
  a_{i-1}(u_i-u_{i-1})-a_i(u_{i+1}-u_i)&=\Lambda u_i
        &&(1\le i<n),\\
  K(u_n-u_{n-1})&=\Lambda(1+c)u_n.
 \end{aligned}
\end{equation}
If $u_i=0$ for some $1\le i<n$, then
$a_{i-1}u_{i-1}+a_i u_{i+1}=0$, so both neighbouring coordinates
vanish. If $u_n=0$, the last equation gives $u_{n-1}=0$.
In either case, repeated application of these equations forces
$u_1=\cdots=u_n=0$, contrary to the normalization.
Thus $u_i>0$ for $1\le i\le n$.

Since $u_0=0$ and all $a_i$ are positive, the numerator vanishes
only at $u=0$. Hence $\Lambda>0$. Testing the quotient with $u_i=i$
also gives
\[
 \Lambda\le
 \frac{Kn+\gamma\ell^2/2}{\sum_{i=1}^n i^2+cn^2}
 =O_d(n^{-2}).
\]
For all sufficiently large $n$, we may therefore write
$\Lambda=2K(1-\cos\theta)$, where $0<\theta<\pi$ and
$\theta=O_d(n^{-1})$.

Since $a_i=K$ for $i\ge L$, we first estimate $u_L$ and $u_{L+1}$
from the equations at $1,\ldots,L$.
As $u_1>0$, we may rescale $u$ so that $a_0u_1=1$.
We use this normalization in place of $\|u\|_c=1$ for the remainder
of the proof; neither $\Lambda$ nor~\eqref{eq:strict-euler} is changed.
Summing the first equation of~\eqref{eq:strict-euler} over
$1,\ldots,i$ gives
$a_i(u_{i+1}-u_i)=1-\Lambda\sum_{j=1}^i u_j$ for $1\le i<n$.
\Needspace{8\baselineskip}

Since $\Lambda>0$ and $u_j>0$, we have
$u_{j+1}-u_j\le a_j^{-1}$ for $1\le j<n$.
Together with $u_0=0$ and $u_1=a_0^{-1}$, this implies
$0<u_i\le\sum_{j=0}^{i-1}1/a_j\le i/K$ for $1\le i\le n$.
Using the formula for $a_j(u_{j+1}-u_j)$ and summing the
increments over $0\le j<i$, we obtain, for $1\le i\le L+1$,
\[
 0\le\sum_{j=0}^{i-1}\frac1{a_j}-u_i
 =\Lambda\sum_{j=1}^{i-1}\frac1{a_j}\sum_{k=1}^{j}u_k
 \le\frac{\Lambda(i-1)i(i+1)}{6K^2}
 =O_d(\Lambda).
\]
The last estimate is uniform because $i\le L+1\le d+3$.
Set $r_\ell:=\gamma\ell^2/(2K+\gamma\ell)$. By the definition of
the coefficients $a_j$,
$K\sum_{j=0}^{L-1}1/a_j=s+{K\ell/(K+\gamma\ell/2)}=L-r_\ell$.
Using $a_L=K$ and $\Lambda=O_d(\theta^2)$, we conclude that
\[
 u_L=\frac{L-r_\ell}{K}+O_d(\theta^2),
 \qquad
 u_{L+1}=\frac{L+1-r_\ell}{K}+O_d(\theta^2).
\]
Moreover,
\[
 L-r_\ell=s+\frac{K\ell}{K+\gamma\ell/2}
 \ge\frac{K}{K+\gamma(d+1)/2}.
\]
Thus, for all sufficiently large $n$,
$u_L\ge(2K+\gamma(d+1))^{-1}>0$, and division gives
$u_{L+1}/u_L=1+{1/(L-r_\ell)}+O_d(\theta^2)$.

For $L+1\le i<n$, equation~\eqref{eq:strict-euler} becomes
\(
 u_{i+1}-2\cos\theta\,u_i+u_{i-1}=0\).
Hence
$u_i=A\sin((i-L)\theta+\beta)$ for $L\le i\le n$,
where $A>0$ and $\beta\in(0,\pi)$; the latter choice is possible
because $u_L>0$. Substituting the estimate for $u_{L+1}/u_L$ gives
\[
 \tan\beta
 =\frac{\sin\theta}{u_{L+1}/u_L-\cos\theta}
 =(L-r_\ell)\theta+O_d(\theta^3).
\]
For sufficiently small $\theta$, the denominator is positive.
Therefore $0<\beta<\pi/2$ and
\(
 \beta=(L-r_\ell)\theta+O_d(\theta^3)
\).
As in Lemma~\ref{lem:path-minimum}, positivity of $u_L,\ldots,u_n$
implies $0<(n-L)\theta+\beta<\pi$.
The last equation of~\eqref{eq:strict-euler} now gives
\[
 \begin{aligned}
 \cot((n-L)\theta+\beta)&=(1+2c)\tan(\theta/2),\\
 (n-L)\theta+\beta
 &=\frac\pi2-\left(c+\frac12\right)\theta+O_d(\theta^3).
 \end{aligned}
\]
Combining this with the expansion for $\beta$ and
$c+1/2=K/2$ gives
$\left(n+K/2-r_\ell\right)\theta=\pi/2+O_d(\theta^3)$.
Since $r_\ell<\ell\le d+1$ and $\theta=O_d(n^{-1})$, we have
$\theta={\pi/[2(n+K/2-r_\ell)]}+O_d(n^{-4})$.
Consequently,
\[
 \Lambda=K\theta^2+O_d(\theta^4)
 =\frac{\kappa}{(n+K/2-r_\ell)^2}+O_d(n^{-4}).
\]
The error estimates are uniform over
$(s,\ell)\in\{0,1\}\times\{1,d,d+1\}$, since this is a finite set.
Also, as $\ell\ge1$,
$r_\ell={\gamma\ell^2/(2K+\gamma\ell)}\ge{\gamma\ell/(2K+\gamma)}\ge r_*$.
For all sufficiently large $n$, the quantities $n+K/2-r_\ell$
and $n+K/2-r_*$ are positive. Hence~\eqref{eq:strict-from-refined}
yields
\[
 \mu\ge\Lambda-\frac{C_1}{n^4}
 \ge\frac{\kappa}{(n+K/2-r_*)^2}
      -O_{d,C_0}(n^{-4}).\qedhere
\]
\end{proof}

\section{Proofs of the main results}\label{sec:finish}
We first construct graphs $G_n\in\mathcal G(n,d)$ that give an upper
bound on $d-\rho(n,d)$. This bound and
Corollary~\ref{cor:path-baseline} agree up to an $O_d(n^{-4})$ error,
proving Theorem~\ref{thm:asymptotic}.
We then show that an extremal graph whose degree sequence is neither
$(d,\ldots,d,d-1)$ nor $(d,\ldots,d,1)$ has a strict block beginning
at $0$ or $1$. Combining the stronger lower bound in
Lemma~\ref{lem:path-strict} with Theorem~\ref{thm:asymptotic}
yields a contradiction, proving
Theorem~\ref{thm:degree-sequence}.
Throughout this section, $d\ge3$ is fixed and odd,
$K=d-1$, and $\kappa=K\pi^2/4$.

\subsection{An upper bound on \texorpdfstring{$d-\rho(n,d)$}{d - rho(n,d)}}\label{sec:construction}
The construction below is based on that of
Liu~\cite[Section~6.1]{Liu} for odd $d$.
We choose the end graph explicitly and use
Lemma~\ref{lem:path-minimum} to obtain an upper bound with an
$O_d(n^{-4})$ error.

\begin{lemma}\label{lem:construction}
For every sufficiently large $n$, there is a graph
$G_n\in\mathcal G(n,d)$ with degree sequence $(d,\ldots,d,d-1)$
when $n$ is odd and $(d,\ldots,d,1)$ when $n$ is even, such that
\[
 d-\lambda_1(G_n)
 \le\frac{\kappa}{(n+K/2)^2}+O_d(n^{-4}).
\]
\end{lemma}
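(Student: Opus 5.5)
We will build $G_n$ explicitly as a long chain of identical units of order $d+1$ with two bounded end gadgets, and then bound $d-\lambda_1(G_n)$ from above by a Rayleigh quotient, using a test vector modelled on the minimizer in Lemma~\ref{lem:path-minimum}.

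\emph{The repeating unit.} We take the equality configuration of Lemma~\ref{lem:block-energy}. A unit consists of a copy $Q_j$ of $K_d$ with a distinguished entry vertex $e_j$, followed by a connector vertex $w_j$. The vertex $w_j$ is joined to the $d-1$ vertices of $Q_j\setminus\{e_j\}$ and to the entry vertex $e_{j+1}$ of the next copy.
\begin{itemize}
\item Every vertex of $Q_j$ other than $e_j$ has degree $(d-1)+1=d$.
\item The entry vertex $e_j$ has degree $(d-1)+1=d$, its extra neighbour being $w_{j-1}$.
\item The connector $w_j$ has degree $(d-1)+1=d$.
\end{itemize}
Along the chain the cuts are therefore alternately $1$ (the edge $w_je_{j+1}$) and $K=d-1$ (inside the unit). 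Thus each unit realizes exactly a block of length $d+1$ with $C_J=K/(d+1)$, which is the extremal local structure.

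\emph{The ends.} At the left end, the first entry vertex $e_1$ has no predecessor connector. When $n$ is odd we leave $e_1$ with degree $d-1$. When $n$ is even we instead hang a pendant vertex of degree $1$ on $e_1$. This gives exactly the prescribed degree sequences, and it supplies the boundary term $a z_0^2$ with $a=K$ in \eqref{eq:path-endpoint-minimum}. At the right end we close the chain with a $d$-regular-except-one-edge gadget $R$ of bounded order. $R$ has one half-edge attached to the last connector, and its order is chosen so that the total order is $n$; we need one gadget for each residue of $n$ modulo $d+1$, for both parities. Both parity and the handshake condition must be checked here. Since $d$ is odd and the left end has an odd or even deficiency as required, a suitable $R$ (for instance, built from $K_{d+1}$'s with a few edges rerouted) should exist for each residue.

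\emph{The test vector.} Let $z$ be the minimizer of \eqref{eq:path-endpoint-minimum} with $a=k=K$, $b=1+c$, and $q$ equal to the number of path steps corresponding to $n$. We define $x$ on $G_n$ by placing each vertex at a path coordinate, using the block interpolation of Section~\ref{sec:cuts} in reverse:
\begin{itemize}
\item the connector $w_j$ receives the value at its endpoint index;
\item the clique vertices receive the values forced by the harmonic extension on the unit, namely $h$ from the equality case.
\end{itemize}
We then compute $x^{\trans}(dI-A)x$ through \eqref{eq:energy-identity}, and compute $\|x\|^2$. Following the computation in Proposition~\ref{prop:refined} in reverse, each unit contributes energy exactly $K t_J^2/(d+1)$ plus $O(\theta^4)$, because $z$ is nearly linear on a unit. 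The norm correction telescopes to $c\,z_n^2$, as in the identity $\alpha=c$ computed there. The quotient is therefore $\kappa/(n+K/2)^2+O_d(n^{-4})$, up to contributions from the two end gadgets.

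\emph{Main obstacle.} The main obstacle is the right-end gadget $R$. Its bounded size shifts the effective length by $O(1)$, and any mismatch in this shift would produce an error of order $n^{-3}$ rather than $n^{-4}$. So the test vector on $R$ must be chosen to be (nearly) constant, equal to the maximal value, so that $R$ contributes only to the norm. Its contribution must then match the weight $b=1+c$ to within $O(\theta^2)$ relative error. We would first try a gadget in which the chain's last unit attaches to a $d$-regular piece joined by a single edge, so that the values on $R$ equal $z_q(1+O(\theta^2))$. We would then fix the number of units, and hence $q$, so that the effective length is $n+K/2$ exactly. Any residual $O(1)$ discrepancy in the effective length must be absorbed by the choice of $R$, and verifying this accounting precisely is the step we expect to need the most care.
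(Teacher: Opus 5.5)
Your chain is exactly the paper's. $Q_j\cup\{w_j\}$ is $K_{d+1}$ minus the edge $e_jw_j$, and it is joined to the next unit by $w_je_{j+1}$; this is the paper's construction with $u_j=e_j$ and $b_j=w_j$. Your two left ends also coincide with the paper's. The gap is that the two things the lemma asserts beyond this are left open.
\begin{enumerate}
\item You never construct $R$; you only say it ``should exist''.
\item You do not carry out the end bookkeeping that decides whether the error is $O_d(n^{-4})$ or only $O_d(n^{-3})$. You flag this as the main obstacle and suggest that an $O(1)$ discrepancy might be absorbed by the choice of $R$. But the order of $R$ is forced by $n$ modulo $d+1$, so there is nothing to tune.
\end{enumerate}
As written, the bound is therefore not proved.

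Both pieces can be closed directly, and there is in fact no discrepancy.

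\emph{The gadget $R$.} $R$ needs exactly one vertex of degree $d-1$, so its order $s$ is odd (its degree sum is $sd-1$). It also satisfies $s\ge d+2$ and $s\equiv n-\varepsilon\pmod{d+1}$, where $\varepsilon=1$ if the pendant is present and $\varepsilon=0$ otherwise. Since $d+1$ is even, the numbers $d+2,d+4,\ldots,2d+1$ represent every odd residue. One valid choice is the circulant on $\mathbb Z_s$ with steps $1,\ldots,K/2$ together with the chords $\{i,i+h\}$, $1\le i\le h$, where $s=2h+1$; this is the paper's $F_s$.

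\emph{The accounting.} Do not subsample a fine-path minimizer. Instead take the connector values $Z_j=f(w_j)$ themselves as the variables, and set $f\equiv Z_N$ on $R$. Then $R$ and its attaching edge cost no energy and add exactly $sZ_N^2$ to the norm.
\begin{itemize}
\item \emph{One unit.} Take end values $A=Z_{j-1}$, $B=Z_j$, and put $t=B-A$. The harmonic extension is $A+\frac{d-1}{d+1}t$ at $e_j$ and $A+\frac{d}{d+1}t$ at the other vertices of $Q_j$. Its energy is exactly $Kt^2/(d+1)$, with no $O(\theta^4)$ term. It contributes $(d+1)A^2+d(B^2-A^2)-K(d+2)t^2/(d+1)^2$ to the norm.
\item \emph{$n$ odd.} Here $Z_0=0$, and the sum telescopes to the quotient of Lemma~\ref{lem:path-minimum} with $q=N-1$, $a=k=K/(d+1)^2$ and $b=(d+s)/(d+1)$. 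Then $(d+1)(q+b-\tfrac12+k/a)=N(d+1)+s+K/2=n+K/2$ exactly.
\item \emph{$n$ even.} The pendant value $P$ contributes energy $KP^2$ and norm weight $2$. Raise that weight to $d+1$ in the auxiliary quotient. This gives $a=K/(d+1)$ and $q=N$, and again $(d+1)(q+b-\tfrac12+k/a)=N(d+1)+s+1+K/2=n+K/2$.
\item \emph{Discarded terms.} These are $K(d+2)\sum_j t_j^2/(d+1)^2$ and, for $n$ even, $(d-1)P^2$. Each is at most a constant times the energy. Evaluating at the auxiliary minimizer therefore loses only $O_d(\Lambda^2)=O_d(n^{-4})$, exactly as in the paper's final step.
\end{itemize}

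The paper performs this same computation, except that it fixes the entry values $u_j$ rather than the connector values. With that choice the pendant enters only through $a=(1-\varepsilon/d)/m$, and the identity $K/(1-\varepsilon/d)=K+\varepsilon$ makes the extra vertex add exactly $1$ to the effective length.
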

\begin{proof}
Put $m:=d+1$, and let $\varepsilon=0$ when $n$ is odd and
$\varepsilon=1$ when $n$ is even.
The set $\{d+2,d+4,\ldots,2d+1\}$ contains exactly one representative
of each odd residue class modulo $m$. Since $n-\varepsilon$ is odd,
choose $s$ in this set such that $s\equiv n-\varepsilon\pmod m$,
and put $q:=(n-s-\varepsilon)/m$.
Then $n=qm+s+\varepsilon$, and $q\ge2$ for all sufficiently large $n$.
The graph $G_n$ will consist of $q$ copies of $K_m$ with one edge
deleted, an $s$-vertex graph, and, when $\varepsilon=1$, one additional
leaf.

We first construct a simple connected graph $F_s$ of order $s$
with one vertex of degree $d-1$ and all others of degree $d$.
Write $s=2h+1$, let $V(F_s)=\{0,\ldots,s-1\}$, and set
\[
 E(F_s)=
 \bigl\{\{i,i+j\}:0\le i<s,\ 1\le j\le K/2\bigr\}
 \cup\bigl\{\{i,i+h\}:1\le i\le h\bigr\},
\]
where addition is taken modulo $s$.
The first edge set gives each vertex the $K$ distinct neighbours
$i\pm1,\ldots,i\pm K/2$ and contains a cycle through all $s$ vertices.
The second edge set is a matching that pairs
$\{1,\ldots,h\}$ with $\{h+1,\ldots,2h\}$.
These matching edges are not in the first edge set, since
$h,h+1>K/2$. Thus $F_s$ is simple and connected, with
$d_{F_s}(0)=d-1$ and $d_{F_s}(i)=d$ for $1\le i<s$.

Take pairwise disjoint copies $B_0,\ldots,B_{q-1}$ of $K_m$ with
one edge deleted, also disjoint from $F_s$.
For $0\le j<q$, let $u_j,b_j$ be the endpoints of the deleted
edge in $B_j$. Denote vertex $0$ of $F_s$ by $u_q$, and add
the edges $b_ju_{j+1}$ for $0\le j<q$.
These edges join $B_0,\ldots,B_{q-1},F_s$ in this order.
Each $b_j$ and $u_{j+1}$ acquires one edge and has degree $d$;
all other vertices except $u_0$ also have degree $d$.
If $\varepsilon=1$, add a new vertex $p$ and the edge $pu_0$.
The resulting graph $G_n$ is simple and connected and has order
$qm+s+\varepsilon=n$.
When $\varepsilon=0$, only $u_0$ has degree below $d$, and its degree
is $d-1$. When $\varepsilon=1$, only $p$ has degree below $d$,
and its degree is $1$.
Thus $G_n\in\mathcal G(n,d)$ has the required degree sequence.
\Needspace{8\baselineskip}

Since the smallest eigenvalue of $dI-A(G_n)$ is
$d-\lambda_1(G_n)$, the Rayleigh principle gives
\[
 d-\lambda_1(G_n)\le
 \frac{f^{\trans}(dI-A(G_n))f}{\|f\|_2^2}
\]
for every nonzero $f:V(G_n)\to\mathbb R$.
We construct such an $f$ from a nonzero sequence
$z=(z_0,\ldots,z_q)$.
Put $t_j:=z_{j+1}-z_j$ for $0\le j<q$, and prescribe
$f(u_j)=z_j$ for $0\le j\le q$.

For each $0\le j<q$, minimize
\[
 \sum_{vw\in E(B_j)\cup\{b_ju_{j+1}\}}(f(v)-f(w))^2
\]
over the values on $V(B_j)\setminus\{u_j\}$, with
$f(u_j)=z_j$ and $f(u_{j+1})=z_{j+1}$ fixed.
The graph on $V(B_j)\cup\{u_{j+1}\}$ with this edge set is connected,
so the minimizer is unique by Lemma~\ref{lem:prescribed-values}.
Every permutation of the $K$ vertices in
$V(B_j)\setminus\{u_j,b_j\}$ preserves this graph and fixes the two
boundary vertices. By uniqueness, $f$ has the same value at these
$K$ vertices.
The optimality equations at any such vertex $v$ and at $b_j$ are
$2f(v)=z_j+f(b_j)$ and $df(b_j)=Kf(v)+z_{j+1}$.
Solving these equations gives
\[
 f(v)=z_j+\frac{t_j}{m}
 \quad\bigl(v\in V(B_j)\setminus\{u_j,b_j\}\bigr),
 \qquad
 f(b_j)=z_j+\frac{2t_j}{m}.
\]
Set $f(v)=z_q$ for every $v\in V(F_s)$.
This agrees with the prescribed value at $u_q$.

When $\varepsilon=1$, the edge $pu_0$ and the degree deficiency
at $p$ contribute
\[
 (z_0-f(p))^2+Kf(p)^2
 =d\left(f(p)-\frac{z_0}{d}\right)^2+\frac Kd z_0^2
\]
to $f^{\trans}(dI-A(G_n))f$.
We therefore set $f(p)=z_0/d$.
The function $f$ is now defined on all of $V(G_n)$ and is nonzero,
since $f(u_j)=z_j$ for $0\le j\le q$.

For each $0\le j<q$, the energy on $E(B_j)\cup\{b_ju_{j+1}\}$ is
\[
 \sum_{vw\in E(B_j)\cup\{b_ju_{j+1}\}}(f(v)-f(w))^2
 =2K\left(\frac{t_j}{m}\right)^2
  +\left(\frac{Kt_j}{m}\right)^2
 =\frac Km t_j^2.
\]
The edges of $F_s$ contribute zero.
When $\varepsilon=0$, the only degree-deficiency term is $z_0^2$
at $u_0$. When $\varepsilon=1$, the edge $pu_0$ and the
degree-deficiency term at $p$ together contribute $(K/d)z_0^2$.
Expanding $f^{\trans}(dI-A(G_n))f$ as
in~\eqref{eq:energy-identity}, we obtain
\begin{equation}\label{eq:construction-energy}
 f^{\trans}(dI-A(G_n))f
 =\left(1-\frac{\varepsilon}{d}\right)z_0^2
  +\frac Km\sum_{j=0}^{q-1}t_j^2.
\end{equation}

We next compute $\|f\|_2^2$.
For each $0\le j<q$,
\[
 \begin{aligned}
 \sum_{v\in V(B_j)}f(v)^2
 &=z_j^2+K\left(z_j+\frac{t_j}{m}\right)^2
       +\left(z_j+\frac{2t_j}{m}\right)^2\\
 &=mz_j^2+(z_{j+1}^2-z_j^2)
       -\frac{K(d+2)}{m^2}t_j^2.
 \end{aligned}
\]
Summing over $j$, the differences $z_{j+1}^2-z_j^2$ telescope
to $z_q^2-z_0^2$.
Adding the contribution $s z_q^2$ from $F_s$ and, when present,
the contribution $z_0^2/d^2$ from the leaf gives
\begin{equation}\label{eq:construction-norm}
 \|f\|_2^2
 =m\sum_{j=0}^{q-1}z_j^2+(s+1)z_q^2
  -\left(1-\frac{\varepsilon}{d^2}\right)z_0^2
  -\frac{K(d+2)}{m^2}\sum_{j=0}^{q-1}t_j^2.
\end{equation}

To apply Lemma~\ref{lem:path-minimum}, define
\begin{equation}\label{eq:construction-minimum}
 \Lambda:=\min_{z\in\mathbb R^{q+1}\setminus\{0\}}
 \frac{(1-\varepsilon/d)z_0^2
       +(K/m)\sum_{j=0}^{q-1}(z_{j+1}-z_j)^2}
      {m\sum_{j=0}^{q-1}z_j^2+(s+1)z_q^2}.
\end{equation}
By~\eqref{eq:construction-norm}, the denominator in this auxiliary
minimum exceeds $\|f\|_2^2$ by
\[
 \left(1-\frac{\varepsilon}{d^2}\right)z_0^2
 +\frac{K(d+2)}{m^2}\sum_{j=0}^{q-1}t_j^2.
\]
We first evaluate $\Lambda$ and then control this difference for
a minimizing vector.

Dividing the numerator and denominator
in~\eqref{eq:construction-minimum} by $m$ gives the minimum
in Lemma~\ref{lem:path-minimum} with
$a={(1-\varepsilon/d)/m}$, $k=K/m^2$, and $b={(s+1)/m}$.
All three parameters are positive.
Since $\varepsilon\in\{0,1\}$, we have
$K/(1-\varepsilon/d)=K+\varepsilon$, and hence
\[
 q+b-\frac12+\frac ka
 =q+\frac{s+1}{m}-\frac12+\frac{K+\varepsilon}{m}
 =\frac{n+K/2}{m}.
\]
It follows from~\eqref{eq:path-endpoint-minimum} that
$\Lambda={\kappa/(n+K/2)^2}+O_d(n^{-4})$.
Here $q=n/m+O_d(1)$, and the possible values of $s$ and
$\varepsilon$ form a finite set depending only on $d$.
Thus the error constant depends only on $d$.

Choose a minimizer $z$ in~\eqref{eq:construction-minimum},
normalized so that
$m\sum_{j=0}^{q-1}z_j^2+(s+1)z_q^2=1$,
and let $f$ be the corresponding function on $G_n$.
By~\eqref{eq:construction-energy},
$f^{\trans}(dI-A(G_n))f=\Lambda$.
The coefficient inequalities
\[
 1-\frac{\varepsilon}{d^2}
 \le\frac md\left(1-\frac{\varepsilon}{d}\right),
 \qquad
 \frac{K(d+2)}{m^2}\le\frac Kd
\]
and~\eqref{eq:construction-norm} give
\[
 0\le1-\|f\|_2^2
 =\left(1-\frac{\varepsilon}{d^2}\right)z_0^2
  +\frac{K(d+2)}{m^2}\sum_{j=0}^{q-1}t_j^2
 \le\frac md\Lambda.
\]
Since $\Lambda=O_d(n^{-2})$, for all sufficiently large $n$,
$\|f\|_2^2\ge1-(m/d)\Lambda\ge1/2$.
Applying the Rayleigh principle to $f$, we conclude that
\begin{align*}
 d-\lambda_1(G_n)
 &\le\frac{f^{\trans}(dI-A(G_n))f}{\|f\|_2^2}
 \le\frac{\Lambda}{1-(m/d)\Lambda}\\
 &=\Lambda+O_d(\Lambda^2)
 =\frac{\kappa}{(n+K/2)^2}+O_d(n^{-4}).\qedhere
\end{align*}
\end{proof}

\begin{proof}[\textbf{Proof of Theorem~\ref{thm:asymptotic}}]
Let $G\in\mathcal G(n,d)$ be extremal, and let $G_n$ be the graph
in Lemma~\ref{lem:construction}. Put
$\mu:=d-\lambda_1(G)=d-\rho(n,d)$.
By extremality and Lemma~\ref{lem:construction},
\[
 \mu\le d-\lambda_1(G_n)
 \le\frac{\kappa}{(n+K/2)^2}+O_d(n^{-4}).
\]
Thus $\mu\le C_0n^{-2}$ for some $C_0=C_0(d)>0$ and all
sufficiently large $n$.
Applying Corollary~\ref{cor:path-baseline} gives
$\mu\ge{\kappa/(n+K/2)^2}-O_d(n^{-4})$.
Combining the two bounds, we obtain
$\mu={\kappa/(n+K/2)^2}+O_d(n^{-4})$.
Finally,
${1/(n+K/2)^2}=1/n^2-K/n^3+O_d(n^{-4})$.
Substituting $\rho(n,d)=d-\mu$, $K=d-1$, and
$\kappa=K\pi^2/4$ proves~\eqref{eq:main}.
\end{proof}

\subsection{Degree sequences of extremal graphs}
The next lemma identifies the graphs to which
Lemma~\ref{lem:path-strict} applies.
It shows that any $G\in\mathcal G(n,d)$ whose degree sequence is
neither $(d,\ldots,d,d-1)$ nor $(d,\ldots,d,1)$ has a strict block
beginning at $0$ or $1$.

\begin{lemma}\label{lem:initial-strict}
Let $G\in\mathcal G(n,d)$, order its vertices as in
Section~\ref{sec:cuts}, and let $\cJ$ be the partition constructed
in the proof of Lemma~\ref{lem:block-partition}.
If $\pi(G)$ is neither $(d,\ldots,d,d-1)$ nor $(d,\ldots,d,1)$,
then $\cJ$ contains a strict block $J=[s,s+\ell)$ with
$s\in\{0,1\}$ and $\ell\in\{1,d,d+1\}$.
\end{lemma}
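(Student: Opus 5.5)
The plan is a case analysis on $q_0=|\partial S_0|$, which equals the total degree deficiency $\sigma:=\sum_{v}\delta_G(v)=nd-2|E(G)|\ge1$, since $\partial S_0$ consists exactly of the edges at $v_0$ in $G^+$. Throughout I use three earlier facts. First, a singleton $\{a\}\in\cJ$ has $C_{\{a\}}=q_a$, so it is strict exactly when $q_a>K$, i.e.\ $q_a\ge d$. Second, by Lemma~\ref{lem:block-energy}, any block of length $d$ or $d+1$ whose first index $a$ satisfies $2\le q_a\le d-2$ is strict. For length $d$ the lemma gives $C_J>K/d$ directly. For length $d+1$, the proof of that lemma gives $C_J\ge(d-2)/(d-1)>K/(d+1)$, and in any case equality forces $q_a=1$. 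Third, by the rule~\eqref{eq:block-rule}, an index $a$ with $q_a<K$ that is the minimum of its maximal run $B$ is the first index of the block $J(B)$.

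\emph{Case $q_0\ge d$.} Here $0$ lies in no run $B$, so $\{0\}\in\cJ$ is a singleton with $C_{\{0\}}=q_0>K$. This gives a strict block with $s=0$ and $\ell=1$.

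\emph{Case $q_0\le d-2$.} If $q_0=1$, then $\sigma=1$, so exactly one vertex has degree $d-1$. That is the excluded sequence $(d,\ldots,d,d-1)$, so $q_0\ge2$. Index $0$ is then the minimum of its run $B$, so $J(B)=[0,\ell)$ with $\ell\in\{d,d+1\}$ and $2\le q_0\le d-2$. By the second fact above this block is strict, with $s=0$.

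\emph{Case $q_0=K=d-1$.} Now $\{0\}$ is a non-strict singleton, and I will examine index $1$. The only edges of $G^+$ between $v_0$ and $v_1$ are the $\delta_G(v_1)$ parallel edges, and $d_{G^+}(v_1)=d$. Hence
\[
 q_1=q_0+d-2\delta_G(v_1)=2d-1-2\delta_G(v_1),
\]
which is odd. We have $\delta_G(v_1)\le\min\{\sigma,d-1\}=d-1$. Equality would make $v_1$ the unique deficient vertex with degree $1$, which is the excluded sequence $(d,\ldots,d,1)$. So $\delta_G(v_1)\le d-2$, and therefore $q_1\ge3$.

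If $q_1\ge d$, then $\{1\}$ is a strict singleton. Otherwise $q_1\le d-2$, because $q_1=d-1$ is ruled out by parity ($q_1$ is odd and $d-1$ is even). Since $q_0=K$ is not $<K$, index $1$ is the minimum of its run, so $\cJ$ contains $J=[1,1+\ell)$ with $\ell\in\{d,d+1\}$ and $3\le q_1\le d-2$. This block is strict by the second fact.

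The main point needing care is this last case. It relies on identifying $q_1$ exactly through the multiplicity of $v_0v_1$ in $G^+$, and on the parity argument that excludes $q_1=K$, which would otherwise give a non-strict singleton. The other cases follow directly from Lemma~\ref{lem:block-energy} and the construction in Lemma~\ref{lem:block-partition}.
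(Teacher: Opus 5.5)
Your proof is correct and follows essentially the same route as the paper. Both split on $q_0$ (at least $d$, at most $d-2$, or exactly $K$), and in the last case both compute $q_1=2d-1-2\delta_G(v_1)$, use parity to rule out $q_1=K$, and invoke Lemma~\ref{lem:block-energy} with $q_1\ge3$. The only difference is cosmetic: you exclude $\delta_G(v_1)=d-1$ directly as forcing $(d,\ldots,d,1)$, whereas the paper phrases the same step as ``at least two deficiencies are positive, so $\delta_G(v_1)\le K-1$.''
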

\begin{proof}
Recall that
\[
 q_0=d_{G^+}(v_0)=\sum_{v\in V(G)}\delta_G(v)\ge1,
 \qquad
 0\le\delta_G(v)=d-d_G(v)\le K.
\]
The upper bound follows from the connectedness of $G$.
Since the deficiencies are nonnegative integers,
$\pi(G)=(d,\ldots,d,d-1)$ if and only if $q_0=1$.
Similarly, $\pi(G)=(d,\ldots,d,1)$ if and only if $q_0=K$ and
exactly one vertex has positive deficiency.
We consider the remaining possibilities.

If $q_0>K$, then $0$ is not an index with $q_i<K$, and no interval
beginning at a positive index can contain $0$.
Thus the construction in Lemma~\ref{lem:block-partition} gives
the singleton block $J=\{0\}$, for which $C_J=q_0>K$.

If $2\le q_0<K$, the block $J$ beginning at $0$ has length
$d$ or $d+1$.
Lemma~\ref{lem:block-energy} gives $C_J>K/\ell_J$ in the
length-$d$ case, and allows equality in the length-$(d+1)$ case
only when $q_0=1$. Thus $J$ is strict.

It remains to consider $q_0=K$ with at least two positive deficiencies.
As above, $\{0\}$ is a singleton block, now with $C_{\{0\}}=K$.
Consequently, the next block begins at $1$.
Since $S_1=\{v_0,v_1\}$, the cut $\partial S_1$ consists of
$q_0-\delta_G(v_1)$ edges incident with $v_0$ and
$d_G(v_1)$ edges incident with $v_1$. Therefore
$q_1=q_0-\delta_G(v_1)+d_G(v_1)=2d-1-2\delta_G(v_1)$.
As the sum of the deficiencies is $K$ and at least two are positive,
$\delta_G(v_1)\le K-1=d-2$. Hence $q_1\ge3$.
Moreover, $q_1$ is odd whereas $K$ is even, so $q_1\ne K$.
If $q_1>K$, the partition rule gives the singleton block $J=\{1\}$,
which is strict.
If $q_1<K$, the block $J$ beginning at $1$ has length $d$ or $d+1$.
Since $q_1\ge3$, Lemma~\ref{lem:block-energy} again gives
$C_J>K/\ell_J$. This completes the proof.
\end{proof}

\begin{proof}[\textbf{Proof of Theorem~\ref{thm:degree-sequence}}]
Let $G\in\mathcal G(n,d)$ be extremal.
By Theorem~\ref{thm:asymptotic},
\[
 \mu:=d-\lambda_1(G)
 =\frac{\kappa}{(n+K/2)^2}+O_d(n^{-4}).
\]
Suppose, to the contrary, that $\pi(G)$ is neither
$(d,\ldots,d,d-1)$ nor $(d,\ldots,d,1)$.
By Lemma~\ref{lem:initial-strict}, the partition $\cJ$ contains
a strict block beginning at $0$ or $1$.
Since $\mu=O_d(n^{-2})$, Lemma~\ref{lem:path-strict} applies and gives
\[
 \mu\ge\frac{\kappa}{(n+K/2-r_*)^2}-O_d(n^{-4}),
 \qquad r_*=\frac{\gamma}{2K+\gamma}>0.
\]
For fixed $d$,
\[
 \frac{\kappa}{(n+K/2-r_*)^2}
 =\frac{\kappa}{(n+K/2)^2}
  +\frac{2\kappa r_*}{n^3}+O_d(n^{-4}).
\]
Comparing these estimates with Theorem~\ref{thm:asymptotic}, we obtain
$2\kappa r_*/n^3\le C/n^4$
for some $C=C(d)>0$.
This is impossible for all sufficiently large $n$, since
$\kappa r_*>0$.
Therefore $\pi(G)$ is $(d,\ldots,d,d-1)$ or $(d,\ldots,d,1)$.

It remains to determine which sequence is possible for each parity
of $n$. Since $d$ is odd,
$\sum_{v\in V(G)}\delta_G(v)=nd-2|E(G)|\equiv n\pmod2$.
The total deficiency is $1$ for $(d,\ldots,d,d-1)$ and
$K=d-1$ for $(d,\ldots,d,1)$.
The former is odd and the latter is even, proving~\eqref{eq:degrees}.
\end{proof}

\appendix
\Needspace{10\baselineskip}
\section{Local estimates and a spectral lower bound}\label{app:ordered-cut}
We prove the coordinate estimate used in Lemma~\ref{lem:block-energy}
and derive an explicit lower bound on $d-\lambda_1(G)$ for every
$G\in\mathcal G(n,d)$.
Throughout this appendix, $d\ge3$ is odd and $K=d-1$.
Parallel edges are counted with multiplicity.

\subsection{A bound on the minimizing coordinates}\label{app:local-energy}
In the case $ab\notin E(H_J)$ considered in the proof of
Lemma~\ref{lem:block-energy}, the minimizing function with
$w_a=1$ and $w_b=0$ satisfies
$C_J\ge q_a\left(1-\max_{a<i<b}w_i\right)$.
The following lemma supplies the required coordinate bound by taking
$H=H_J$, $(L,R)=(a,b)$, $m=\ell_J-1$, and $k=q_a$.
\Needspace{9\baselineskip}

\begin{lemma}\label{lem:local-energy}
Let $H$ be a connected loopless multigraph, and let $L,R$ be
distinct nonadjacent vertices.
Suppose that $F:=H-\{L,R\}$ is simple, with
$m\in\{d-1,d\}$ vertices, that $d_H(i)=d$ for every $i\in V(F)$,
and that $k:=d_H(L)$ satisfies $1\le k\le d-2$.
The graph $F$ need not be connected.
If $w$ minimizes $\cE_H(w)$ subject to $w_L=1$ and $w_R=0$, then
\[
 \max_{i\in V(F)}w_i
 \le\frac{(d-m+2)k}{(d-m+1)d+k}.
\]
\end{lemma}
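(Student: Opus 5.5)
Let $M:=\max_{i\in V(F)}w_i$, attained at some $i_0\in V(F)$, and put $p_i$ for the number of edges joining $L$ to $i$, so that $\sum_{i\in V(F)}p_i=k$. By Lemma~\ref{lem:prescribed-values} we have $0\le w_i\le1$. Since $L$ and $R$ are nonadjacent and $d_H(i)=d$, the optimality equations are
\[
 d\,w_i=p_i+\sum_{j\in N_F(i)}w_j\qquad(i\in V(F)),
\]
because $w_R=0$. I would compare $M$ with the total $S:=\sum_{i\in V(F)}w_i$ and with $T:=S-M$, and close the resulting linear inequalities.

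The first step is the upper bound at $i_0$. Since $F$ is simple, $N_F(i_0)\subseteq V(F)\setminus\{i_0\}$, so $dM\le p_{i_0}+T$. The degree constraint sharpens this. We have $|N_F(i_0)|\le\min\{m-1,\ d-p_{i_0}\}$, so $i_0$ has at least $m-1-(d-p_{i_0})$ non-neighbours in $F$. I would keep this refinement available, since it becomes relevant once $p_{i_0}$ is large.

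The second step bounds $T$. For every $j\ne i_0$, simplicity gives $d\,w_j\le p_j+S-w_j$. I would improve this using the fact that $j$ sends at least $d-p_j-(m-1)$ edges to $R$. Summing over $j\ne i_0$ and using $\sum_{j\ne i_0}p_j=k-p_{i_0}$ yields $(d+2-m)T\le k-p_{i_0}+(m-1)M$. Substituting this into $dM\le p_{i_0}+T$ gives
\[
 M\Bigl(d-\tfrac{m-1}{d-m+2}\Bigr)\le p_{i_0}+\frac{k-p_{i_0}}{d-m+2}.
\]
With $p_{i_0}\le k$ this already yields $M\le k(d-m+2)/\bigl((d-m+1)d+d-m+1\bigr)$. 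That bound has the correct numerator but the denominator term $d-m+1$ in place of $k$. It therefore suffices when $k\le d-m+1$, and in particular it settles $k=1$, which is exactly the case needed for equality in Lemma~\ref{lem:block-energy}.

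The main obstacle is obtaining the $+k$ in the denominator in general. The crude step loses because it lets $i_0$ be adjacent to every other vertex of $F$ and simultaneously lets every other $w_j$ be as large as $(p_j+S)/(d+1)$. My plan is to exploit the trade-off in $p_{i_0}$. If $p_{i_0}$ is large, then $i_0$ has few $F$-neighbours ($|N_F(i_0)|\le d-p_{i_0}$), so $dM\le p_{i_0}+(d-p_{i_0})\max_{j\ne i_0}w_j$. If $p_{i_0}$ is small, then the $L$-edges are spread over other vertices, and the summed inequality carries the factor $k-p_{i_0}$. Concretely, I would set $M':=\max_{j\ne i_0}w_j$ and bound $M'$ by the same summation argument. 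This gives a $2\times2$ linear system in $(M,M')$ with parameter $p:=p_{i_0}\in[0,k]$. I would then check, separately for $m=d$ and $m=d-1$, that the worst case over $p$ gives exactly $(d-m+2)k/((d-m+1)d+k)$; for $m=d$ this is $2k/(d+k)$, and for $m=d-1$ it is $3k/(2d+k)$. Failing that, I would use the exact identity $dS=k+\sum_i\deg_F(i)w_i$ as an additional constraint on $S$.
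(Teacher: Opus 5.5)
Your crude step is correct but covers only a special case, and the general case rests on a plan that fails as described. This is a genuine gap.

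\textbf{What you do prove.} The summed inequality $(d-m+2)T\le k-p_{i_0}+(m-1)M$ follows from simplicity alone; your $R$-edge refinement is never actually used. Combined with $dM\le p_{i_0}+T$, it gives $M\le\beta k/\bigl((\beta-1)(d+1)\bigr)$, where $\beta:=d-m+2$. This yields the lemma when $k\le\beta-1$. It does not cover what Lemma~\ref{lem:block-energy} needs, namely $2\le k\le d-2$ for $m=d$ and $3\le k\le d-2$ for $m=d-1$. For $m=d$, $k=d-2$, $d\ge5$, your bound is already at least $1$, so it is vacuous there.

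\textbf{Why the $(M,M')$ plan fails.} Write $p:=p_{i_0}$ and $M':=\max_{j\ne i_0}w_j$. The natural system is
\[
dM\le p+(d-p)M',\qquad (d+1)M'\le(k-p)+S,\qquad \beta S\le(d+1)M+k-p.
\]
It gives only
\[
M\le\frac{\beta(d+1)p+(\beta+1)(d-p)(k-p)}{(d+1)\bigl((\beta-1)d+p\bigr)}.
\]
This matches the target at $p=k$ but exceeds it for intermediate $p$. For $d=m=5$, $k=3$, $p=2$ it gives $11/14$, and your crude bound gives $5/6$, while the target is $3/4$. Neither bound excludes $p=2$, so taking the worst case over $p$ cannot produce the lemma. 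The loss comes from replacing every $F$-neighbour of $i_0$ by $M'$. This charges the remaining $L$-mass $k-p$ once per neighbour, i.e.\ $(d-p)(k-p)$ in total, although it is available only once. Your fallback via $dS=k+\sum_i\deg_F(i)w_i$ is not specific enough to assess.

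\textbf{The missing idea.} Sum over the neighbours of $i_0$ instead of maximizing. Add $(d+1)w_j\le p_j+S$ over $j\in N_F(i_0)$, and use $\sum_{j\in N_F(i_0)}p_j\le k-p$ and $d_F(i_0)\le d-p$. With the optimality equation at $i_0$ this gives $d(d+1)M\le dp+k+(d-p)S$. Since $d-p\ge2$, you may substitute $\beta S\le(d+1)M+k-p$, which yields
\[
M\le\frac{p-k}{d+1}+\frac{\beta k}{(\beta-1)d+p}.
\]
For $0\le p\le k\le d-2$ and $\beta\in\{2,3\}$ one checks $\bigl((\beta-1)d+k\bigr)\bigl((\beta-1)d+p\bigr)\ge\beta k(d+1)$. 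Hence $\frac{\beta k}{(\beta-1)d+p}-\frac{\beta k}{(\beta-1)d+k}\le\frac{k-p}{d+1}$, and so $M\le\beta k/\bigl((\beta-1)d+k\bigr)$ uniformly in $p$. This is exactly the paper's argument, and without it your proposal establishes the lemma only for $k\le d-m+1$.
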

\begin{proof}
For $i\in V(F)$, let $p_i$ be the number of edges joining $L$ to $i$,
and let $N_F(i)$ be its set of neighbours in $F$.
Since $LR\notin E(H)$,
$\sum_{i\in V(F)}p_i=k$ and $d_F(i)+p_i\le d$ for $i\in V(F)$.
By Lemma~\ref{lem:prescribed-values}, $0\le w_i\le1$ for all
$i\in V(H)$.
Differentiating the energy with respect to each interior coordinate
gives
\begin{equation}\label{eq:minimizer-local}
 dw_i=p_i+\sum_{j\in N_F(i)}w_j
 \qquad(i\in V(F)).
\end{equation}
Choose $r\in V(F)$ such that $w_r=\max_{i\in V(F)}w_i$, and set
$S:=\sum_{i\in V(F)}w_i$ and $\beta:=d-m+2\in\{2,3\}$.
Since $F$ is simple and $w_i\ge0$,
$\sum_{j\in N_F(i)}w_j\le S-w_i$.
Thus~\eqref{eq:minimizer-local} implies
$(d+1)w_i\le p_i+S$ for every $i\in V(F)$.
Summing these inequalities over $V(F)\setminus\{r\}$ gives
$(d+1)(S-w_r)\le k-p_r+(m-1)S$,
and hence
$\beta S\le(d+1)w_r+k-p_r$.

Summing $(d+1)w_j\le p_j+S$ over $j\in N_F(r)$, we also obtain
\[
 (d+1)\sum_{j\in N_F(r)}w_j
 \le\sum_{j\in N_F(r)}p_j+d_F(r)S
 \le k-p_r+(d-p_r)S.
\]
Here we used $\sum_{j\in N_F(r)}p_j\le k-p_r$,
$d_F(r)\le d-p_r$, and $S\ge0$.
Applying~\eqref{eq:minimizer-local} at $r$ yields
$d(d+1)w_r\le d p_r+k+(d-p_r)S$.
Since $d-p_r\ge2$, we may substitute
$S\le((d+1)w_r+k-p_r)/\beta$ into the right-hand side.
Rearranging, and noting that $(\beta-1)d+p_r>0$, gives
\begin{equation}\label{eq:max-local}
 \begin{aligned}
 w_r
 &\le
 \frac{\beta(d p_r+k)+(d-p_r)(k-p_r)}
      {(d+1)((\beta-1)d+p_r)}\\
 &=\frac{p_r-k}{d+1}
   +\frac{\beta k}{(\beta-1)d+p_r}.
 \end{aligned}
\end{equation}
To compare the last expression with $\beta k/((\beta-1)d+k)$,
observe that $0\le p_r\le k\le d-2$ implies
\[
 \begin{aligned}
 &\bigl((\beta-1)d+k\bigr)\bigl((\beta-1)d+p_r\bigr)
       -\beta k(d+1)\\
 &\quad\ge
 \begin{cases}
 d(d-k)-2k\ge4,&\beta=2,\\
 4d^2-k(d+3)\ge3d^2-d+6>0,&\beta=3.
 \end{cases}
 \end{aligned}
\]
Consequently,
\[
 \begin{aligned}
 \frac{\beta k}{(\beta-1)d+p_r}
 -\frac{\beta k}{(\beta-1)d+k}
 &=
 \frac{\beta k(k-p_r)}
 {\bigl((\beta-1)d+p_r\bigr)\bigl((\beta-1)d+k\bigr)}\\
 &\le\frac{k-p_r}{d+1}.
 \end{aligned}
\]
Substitution in~\eqref{eq:max-local} gives
$w_r\le{\beta k/((\beta-1)d+k)}$.
The result follows from $\beta=d-m+2$ and the choice of $r$.
\end{proof}

\Needspace{12\baselineskip}
\subsection{An explicit spectral lower bound}
The estimates~\eqref{eq:coarse-energy} and~\eqref{eq:coarse-norm}
do not require the assumption $d-\lambda_1(G)=O_d(n^{-2})$.
We combine them with the exact minimum of
$\cE_P(z)/\|z\|_2^2$ under the condition $z_0=0$.

\begin{corollary}\label{cor:explicit}
Let $G\in\mathcal G(n,d)$, where $d\ge3$ is odd, and put
$\nu_n:=2-2\cos(\pi/(2n+1))$. Then
\begin{equation}\label{eq:explicit}
 d-\lambda_1(G)\ge
 \frac{K\nu_n}{\bigl(1+\sqrt{d(d+1)\nu_n}\bigr)^2}.
\end{equation}
In particular,
$d-\lambda_1(G)\ge K\pi^2/(4n^2)-O_d(n^{-3})$.
\end{corollary}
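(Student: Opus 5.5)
We combine the coarse energy bound~\eqref{eq:coarse-energy} with the coarse norm bound~\eqref{eq:coarse-norm}; neither assumes $\mu=O_d(n^{-2})$. Write $\mu:=d-\lambda_1(G)$. Dropping the nonnegative strict-block term in~\eqref{eq:coarse-energy} gives $K\cE_P(g)\le\mu$.

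Next we use the exact minimum of the path Rayleigh quotient. The quantity $\cE_P(z)/\|z\|_2^2$ is minimized over $z\in\mathbb R^{n+1}$ with $z_0=0$ and $(z_1,\ldots,z_n)\ne0$. This is the smallest eigenvalue of the tridiagonal matrix with diagonal $(2,\ldots,2,1)$ and off-diagonal entries $-1$, i.e.\ the Dirichlet condition at $0$ and the free (Neumann) condition at $n$. Its eigenvectors are $z_i=\sin(i\theta)$ with $\cos((n+1/2)\theta)=0$. The smallest root is $\theta=\pi/(2n+1)$, so the minimum eigenvalue is $2-2\cos\theta=\nu_n$. We check this quickly: the last equation $z_n-z_{n-1}=\nu_n z_n$ reduces to $\sin((n+1)\theta)=\sin(n\theta)$, which holds at this $\theta$. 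Since this sine vector is positive, it is the Perron-type minimizer.

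Since $g_0=0$ and $g_n=x_n>0$, we get $\cE_P(g)\ge\nu_n\|g\|_2^2$. Hence $\|g\|_2\le\sqrt{\mu/(K\nu_n)}$. On the other hand, \eqref{eq:coarse-norm} and the triangle inequality give
\[
1=\|x\|_2\le\|g\|_2+\|x-g\|_2\le\sqrt{\mu/(K\nu_n)}+\sqrt{d(d+1)\mu/K}
=\sqrt{\mu/K}\,\bigl(\nu_n^{-1/2}+\sqrt{d(d+1)}\bigr).
\]
Squaring and multiplying through by $\nu_n$ yields $\mu\ge K\nu_n/(1+\sqrt{d(d+1)\nu_n})^2$, which is \eqref{eq:explicit}.

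For the asymptotic consequence, note that $\nu_n=\pi^2/(2n+1)^2+O(n^{-4})=\pi^2/(4n^2)-O(n^{-3})$. Also $(1+\sqrt{d(d+1)\nu_n})^{-2}=1-O_d(n^{-1})$, because $\sqrt{\nu_n}=O(n^{-1})$. Multiplying these gives $K\pi^2/(4n^2)-O_d(n^{-3})$.

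The only step needing care is the exact eigenvalue of the mixed-boundary path. We would verify it either by the explicit sine eigenvector together with positivity, or by identifying the matrix with the $P_{2n}$-type reflection trick. All other steps are immediate from \eqref{eq:coarse-energy} and~\eqref{eq:coarse-norm}.
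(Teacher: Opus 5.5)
Your proposal is correct and follows the paper's argument. The steps are the same: drop the strict-block term in \eqref{eq:coarse-energy}, identify $\nu_n$ as the exact minimum of $\cE_P(z)/\|z\|_2^2$ under $z_0=0$ via the positive vector $v_i=\sin(i\pi/(2n+1))$, then combine with \eqref{eq:coarse-norm} through the triangle inequality $1\le\|g\|_2+\|x-g\|_2$. The only minor difference is how minimality of $\nu_n$ is certified: the paper uses the identity $\cE_P(z)-\nu_n\|z\|_2^2=\sum_{i=1}^{n-1}v_iv_{i+1}\bigl(z_{i+1}/v_{i+1}-z_i/v_i\bigr)^2$, whereas you use Perron--Frobenius or the list of $n$ distinct eigenvalues. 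Both establish the same fact.
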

\begin{proof}
Put $v_i:=\sin(i\pi/(2n+1))$ for $0\le i\le n+1$.
Then $v_0=0$, $v_{n+1}=v_n$, and $v_i>0$ for $1\le i\le n$.
The sine addition formula gives
\[
 2v_i-v_{i-1}-v_{i+1}=\nu_n v_i
 \quad(1\le i<n),
 \qquad
 v_n-v_{n-1}=\nu_n v_n.
\]
For every real sequence $z=(z_0,\ldots,z_n)$ with $z_0=0$,
expanding the energy and using these identities yields
\[
 \begin{aligned}
 \cE_P(z)-\nu_n\|z\|_2^2
 &=\sum_{i=1}^{n-1}(2-\nu_n)z_i^2
   +(1-\nu_n)z_n^2
   -2\sum_{i=1}^{n-1}z_i z_{i+1}\\
 &=\sum_{i=1}^{n-1}v_i v_{i+1}
   \left(\frac{z_{i+1}}{v_{i+1}}-\frac{z_i}{v_i}\right)^2
 \ge0.
 \end{aligned}
\]
Equality is attained at $z=(v_0,\ldots,v_n)$.
Therefore
\[
 \nu_n=
 \min_{\substack{z\in\mathbb R^{n+1}\setminus\{0\}\\z_0=0}}
 \frac{\cE_P(z)}{\|z\|_2^2}.
\]

Let $x$ be the ordered unit Perron vector of $G$, extended by $x_0=0$,
and let $g$ be the sequence constructed in Section~\ref{sec:cuts}.
Write $\mu:=d-\lambda_1(G)$.
Applying the preceding inequality to $g$ and
using~\eqref{eq:coarse-energy}, we obtain
$\nu_n\|g\|_2^2\le\cE_P(g)\le\mu/K$.
Also,~\eqref{eq:coarse-norm} gives
$\|x-g\|_2\le\sqrt{d(d+1)\mu/K}$.
The triangle inequality now implies
\[
 1=\|x\|_2
 \le\|g\|_2+\|x-g\|_2
 \le\sqrt{\frac{\mu}{K\nu_n}}
    +\sqrt{\frac{d(d+1)}K\,\mu}.
\]
Rearranging gives
\[
 \sqrt{\mu}\ge
 \frac{\sqrt{K\nu_n}}{1+\sqrt{d(d+1)\nu_n}}.
\]
Squaring proves~\eqref{eq:explicit}.
Finally, the expansions
\[
 \nu_n=\frac{\pi^2}{4n^2}+O(n^{-3}),
 \qquad
 \bigl(1+\sqrt{d(d+1)\nu_n}\bigr)^{-2}
 =1+O_d(n^{-1})
\]
give the stated asymptotic bound.
\end{proof}

\end{document}